\algrenewcommand\algorithmicrequire{\textbf{Input:}}
\algrenewcommand\algorithmicensure{\textbf{Output:}}
\renewcommand{\epsilon}{\varepsilon}
\newlength{\continueindent}
\newcommand*{\ALG@customparshape}{\parshape 2 \leftmargin \linewidth \dimexpr\ALG@tlm+\continueindent\relax \dimexpr\linewidth+\leftmargin-\ALG@tlm-\continueindent\relax}
\apptocmd{\ALG@beginblock}{\ALG@customparshape}{}{\errmessage{failed to patch}}
\algnewcommand\algorithmicpardo{\textbf{in parallel do}}
\providecommand{\keywords}[1]{\textbf{\textit{Keywords }} #1}
\newtheorem{theorem}{Theorem}
\newcommand{\x}{x}
\DeclareMathOperator{\one}{\mathds{1}} 
\DeclareMathOperator*{\N}{\mathbb{N}}
\DeclareMathOperator*{\R}{\mathbb{R}}
\DeclareMathOperator*{\Rd}{\mathbb{R}^d}
\DeclareMathOperator*{\Rm}{\mathbb{R}^m}
\DeclareMathOperator*{\setX}{\mathcal{X}}
\DeclareMathOperator{\integrable}{\mathcal{L}^1}
\DeclareMathOperator{\expectation}{\mathbb{E}}      % for expectations
\DeclareMathOperator{\probability}{\mathbb{P}}      % for probability
\DeclarePairedDelimiter{\abs}{\lvert}{\rvert}       % for absolute value
\DeclareMathOperator{\rv}{X}        % for a toy random variable
\newcommand{\cdf}[1]{F_{#1}}        % for a given random variable
\newcommand{\pquantile}[1]{Q_p(#1)}     % for a given random variable
\newcommand{\psuperquantile}[1]{\bar{Q}_p(#1)}      % for a given random variable
\DeclareMathOperator{\measuredWRT}{d} % d measure
\DeclarePairedDelimiterXPP{\exof}[1]{\ex}{[}{]}{}{%     % for conditional expectations
 #1}
\DeclarePairedDelimiterXPP{\probof}[1]{\prob}{(}{)}{}{%     % for conditional probabilities
 #1}
\DeclareMathOperator*{\argmin}{arg\,min}        % for argmin
\newcommand{\indexset}{\mathcal{I}_n}
\newcommand{\Plmu}{(P_{\lambda, \mu})}
\newcommand{\dist}[1]{d_{#1}}
\definecolor{Gray}{gray}{0.85}
\newcolumntype{a}{>{\columncolor{Gray}}l}
\definecolor{dkgreen}{rgb}{0,0.6,0}
\definecolor{gray}{rgb}{0.5,0.5,0.5}
\definecolor{mauve}{rgb}{0.58,0,0.82}
\tiny\color{gray},
\newcommand{\fixed@sra}{$\vrule height 2\fontdimen22\textfont2 width 0pt\shortrightarrow$}
\newcommand{\shortarrow}[1]{%
  \mathrel{\text{\rotatebox[origin=c]{\numexpr#1*45}{\fixed@sra}}}
}
\newtheorem{lemma}[theorem]{Lemma}
\newtheorem{proposition}[theorem]{Proposition}
\title{Chance constrained problems: a bilevel convex optimization perspective}
\author{
% authors
Yassine Laguel$^{1}$ \hspace{2em}
J\'{e}r\^{o}me Malick$^{1}$  \hspace{2em}
Wim Van Ackooij$^{2}$ \hspace{2em}\\
% affiliations
\small{$^{1}$Univ. Grenoble Alpes, CNRS, LJK, 38000 Grenoble, France}  \\
\small{$^{2}$EDF R\&D, Saclay, France}}
\date{\vspace{-2em}}
\begin{document}
\maketitle

% TODO: change to \title for non-ICML
%%%%%%%%%%%%%%%%%%%%%%%% START-ICML
% \twocolumn[
% \icmltitle{Device Heterogeneity in Federated Learning: A Superquantile Approach}

% % TODO: author info
% \begin{icmlauthorlist}
% \icmlauthor{Aeiau Zzzz}{to}
% \end{icmlauthorlist}
% \icmlaffiliation{to}{Department of Computation, University of Torontoland, Torontoland, Canada}
% \icmlcorrespondingauthor{Cieua Vvvvv}{c.vvvvv@googol.com}
% \icmlkeywords{Federated Learning, risk-senstive}
% \vskip 0.3in
% ]
% %\printAffiliationsAndNotice{}
%%%%%%%%%%%%%%%%% End - ICML

\begin{abstract}
Chance constraints are a valuable tool for the design of safe decisions in uncertain environments; they are used to model satisfaction of a constraint with a target probability.
However, because of possible non-convexity and non-smoothness, optimizing over a chance constrained set is challenging. In this paper, we
establish an exact reformulation of %(possibly non-convex)
chance constrained problems as a bilevel problems with convex lower-levels. We then derive a tractable penalty approach, where the penalized objective is a difference-of-convex function that we minimize with a suitable bundle algorithm. We release an easy-to-use open-source python toolbox implementing the approach, with a special emphasis on fast computational subroutines.
\end{abstract}

\keywords{
Stochastic programming \textperiodcentered \; Chance constraints \textperiodcentered \; Bi-level optimization \textperiodcentered \; DC programming}

\section{Introduction}\label{intro}
Chance constraints appear as a versatile way to model the exposure to uncertainty in optimization. Introduced in \cite{charnes1959chance}, they have been used in many applications, such as in energy management\;\cite{prekopa1978flood,vanAckooij_Henrion_Moller_Zorgati_2011b}, in telecommunications\;\cite{medova1998chance} or for reinforcement learning\;\cite{chow2017risk}, to name of few of them. We refer to the seminal paper\;\cite{Prekopa_1995}, the book chapter\;\cite{Dentcheva_2009} for introduction to the theory and to the recent article\;\cite{vanAckooij_2020} for a discussion covering recent developments.

In this paper, we consider a general chance-constrained optimization problem of the following form. For a fixed safety probability level $p \in [0,1)$, we write:
\begin{equation}\label{eq:general_prob}
\begin{split}
    &\!\!\min_{x\in \setX}\;\; f(\x)\\
    &\text{s.t.} \;\; \probability[g(\x, \xi) \leq 0] \geq p,
\end{split}
\end{equation}
where $f\colon \Rd \rightarrow \R$ and $g\colon \Rd \times \Rm \rightarrow \R$ are two given functions, {$\xi$ is a random vector valued in $\Rm$} and  $\setX \subset \Rd$ is a (deterministic) closed constraint set.

We consider the case of underlying convexity: We assume that $f$ and $g$ are convex (with respect to $\x$). For our practical developments, we also assume that we have first-order oracles for $f$ and $g$ and that the $\setX$ is a box constraint on the decision variable $x$.
Even with underlying convexity, modeling uncertainty may make the chance constraint feasible set non-convex (see e.g.\;\cite{Henrion_Strugarek_2008} for discussion on possible convexity when $p$ is close to $1$).
Though solving (non-convex) chance-constrained problems is difficult, several computational methods have been proposed, regardless of any considerations of convexity and smoothness, and under various assumptions on uncertainty. Let us mention: sample average approximation \cite{pagnoncelli2009sample,Luedtke_Ahmed_2008}, scenario approximation \cite{calafiore2006scenario}, convex approximation \cite{nemirovski2006convex}, or $p$-efficient points \cite{dentcheva2000concavity}; see e.g.\;\cite{vanAckooij_2020} for an overview.

In this paper, we propose an original approach for solving chance-constrained optimization problems.
First, we present an exact reformulation of (nonconvex) chance-constrained problems as (convex) bilevel optimization problems. This reformulation is simple and natural, involving superquantiles (also called conditional vale-at-risk), a risk measure studied by T.\;Rockafellar and his co-authors; see e.g.,\;the tutorial\;\cite{rockafellar2013superquantiles}.
Second, exploiting this bilevel reformulation, we propose a general algorithm for solving chance-constrained problems, and we release an open-source python toolbox implementing it. In the case where we make no assumption on the underlying uncertainty and have only samples of $\xi$, we propose and analyse a double penalization method, leading to an unconstrained single level DC (Difference of Convex) programs.
Our approach enables to deal with a fairly large sample of data-points in comparison with state-of-the-art methods based on mixed-integer reformulations, e.g.~\cite{ahmed2008solving}.
Thus our work mixes a variety of techniques coming from different subdomains of optimization: penalization, error bounds, DC programming,
bundle algorithm, Nesterov's smoothing; relevant references are given along the discussion.

This paper is structured as follows. In Section\;\ref{sec:bilevel}, we leverage the known link with (super)quantiles and chance-constraint to establish a novel bilevel reformulation of general chance constrained problems. In Section\;\ref{sec:penalty}, we propose and analyse a penalty approach revealing the underlying DC structure. In Section\;\ref{sec:bundle}, we discuss implementation of this approach in our publicly available toolbox. In section\;\ref{sec:experiments}, we provide illustrative numerical experiments, as a proof of concept, showing the interest of the method. Technical details on secondary theoretical points and on implementation issues are postponed to appendices.
%\newpage

%----------------------------------------------------------------------
%%% From Chance constraints to Bilevel
%----------------------------------------------------------------------
\section{Chance constrained problems seen as bilevel problems}\label{sec:bilevel}

In this section, we derive the reformulation of a
chance constraint as a bilevel program wherein both the upper and
lower level problems, when taken individually, are convex.
We first recall in Section\;\ref{sec:recalls} useful definitions.
Our terminology and notations closely follow those of~\cite{rockafellar2013superquantiles}.

\subsection{Basics: cumulative distributions functions, quantiles, and superquantiles}\label{sec:recalls}

In what follows, we consider a probability space
and integrable real random variables.
Given a random variable $\rv$, its \textit{cumulative distribution function}, denoted by $\cdf{\rv}:\R \rightarrow [0,1]$, is defined as:
\begin{equation}\label{eq:def_cdf}
    \cdf{\rv}(t) := \probability[\rv \leq t] \quad \forall t \in \R.
\end{equation}
The cumulative distribution function is known to be both
non-decreasing and right-continuous. Its jumps occur exactly at
the atoms of $\rv$,  {that is} the values $t \in \R$ at
which $\probability[\rv = t] > 0$. These properties {enable one to define} the \textit{quantile function} $p
\mapsto \pquantile{\rv}$ as the following generalized inverse:
\begin{equation}\label{eq:def_quantile}
    \pquantile{\rv} = \inf\{t \in \R : \; \cdf{\rv}(t) \geq p\}, \quad \forall p \in [0,1).
\end{equation}
If $\rv$ is assumed to belong to $\integrable$, we can
additionally define for any $p \in [0,1)$ its
$p$-\textit{superquantile}, $\psuperquantile{\rv}$ as follows:
\begin{equation}\label{eq:def_superquantile}
    \psuperquantile{\rv} = \frac{1}{1-p} \int_{p' = p}^1 Q_{p'}(\rv) dp'.
\end{equation}

% By the fundamental theorem of calculus, it is easy to see that, when $\rv$ is integrable, the knowledge of the quantile function is equivalent to the one of the superquantile function.
% Thus, these three definitions give equal information about the distribution of $\rv$. From an statistical viewpoint, they are also equally consistent
% These three definitions are known to give equal \textbf{SPECIFY}
% information about the distribution of $\rv$.

For a given random variable $\rv$, as a consequence of~\cite[Th.\;2]{rockafellar2014random}, one can recover from the cumulative distribution function $\cdf{\rv}$ both the $p$-quantile and the $p$-superquantile functions as functions of $p$ and reciprocally, knowing either the $p$-quantile or the $p$-superquantile for all $p \in [0,1]$ suffices to recover $\cdf{\rv}$.

From a statistical
viewpoint, these three notions are also equally consistent \cite[Th.\;4]{rockafellar2013superquantiles} in the sense that convergence in
distribution for a sequence of random variables $(\rv_n)_{n \geq
0}$ is equivalent to the  {pointwise} convergence of the
two sequences of functions $p \mapsto \pquantile{\rv_n}$, $p
\mapsto \psuperquantile{\rv_n}$. This result is particularly
relevant when the distributions are observed through data
sampling. We can use the empirical cumulative distribution
functions, quantiles and super-quantiles all while upholding
asymptotic convergence as the sample size grows.

From an optimization point of view though, these three objects are
very different. In contrast with the others,
%quantile and cumulative functions,
the superquantile has several good
properties (including convexity~\cite{ben2007old,follmer2002convex,ruszczynski2006optimization}), useful with respect to numerical computation and optimization.
In our developments, we use the following key result
\cite[Th. 1]{rockafellar2000optimization} linking quantiles and
superquantile through a %convex,
one-dimensional problem.

\begin{lemma}\label{lem:rock_cvar}
For an integrable random variable $\rv$ and a
probability level $p$, the superquantile $\psuperquantile{\rv)}$ and quantile
$\pquantile{\rv}$ are respectively the optimal value and the
optimal solution of the convex one-dimensional problem
\begin{equation}\label{eq:link_theorem}
         \inf_{\eta \in \R} ~\eta + \frac{1}{1-p} \expectation[\max(\rv -\eta, 0)].\\
\end{equation}
\end{lemma}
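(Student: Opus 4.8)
The plan is to show directly that the convex function $\varphi(\eta) := \eta + \frac{1}{1-p}\,\expectation[\max(\rv-\eta,0)]$ attains its infimum, that $\pquantile{\rv}$ is a minimizer, and that the corresponding optimal value equals $\psuperquantile{\rv}$. First I would record that $\varphi$ is convex: the map $\eta \mapsto \max(\rv(\omega)-\eta,0)$ is convex for each $\omega$, taking expectation (which is a monotone linear operation on $\integrable$) preserves convexity, and adding the linear term $\eta$ keeps it convex; finiteness on all of $\R$ follows from integrability of $\rv$. Next I would compute the one-sided derivatives, or equivalently the subdifferential: writing $H(t)=\max(t,0)$ whose derivative is the step function, one gets, via a dominated-convergence / monotone-convergence argument to differentiate under the expectation,
\[
\varphi'_+(\eta) = 1 - \frac{1}{1-p}\,\probability[\rv > \eta],
\qquad
\varphi'_-(\eta) = 1 - \frac{1}{1-p}\,\probability[\rv \geq \eta].
\]

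From these expressions the optimality condition $\varphi'_-(\eta)\le 0 \le \varphi'_+(\eta)$ becomes $\probability[\rv<\eta] \le p \le \probability[\rv \le \eta]$, i.e. $\cdf{\rv}(\eta^-)\le p \le \cdf{\rv}(\eta)$. I would then check that $\eta^\star := \pquantile{\rv} = \inf\{t: \cdf{\rv}(t)\ge p\}$ satisfies this: right-continuity of $\cdf{\rv}$ gives $\cdf{\rv}(\eta^\star)\ge p$, and the infimum definition gives $\cdf{\rv}(t) < p$ for $t<\eta^\star$, hence $\cdf{\rv}(\eta^{\star-})\le p$. So $\eta^\star$ is a global minimizer of the convex function $\varphi$, which also establishes that the infimum is attained.

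It remains to identify the optimal value $\varphi(\eta^\star)$ with $\psuperquantile{\rv}$. The cleanest route is to use the quantile representation: for $U$ uniform on $[0,1]$, $Q_U(\rv)$ has the same law as $\rv$, so $\expectation[\max(\rv-\eta^\star,0)] = \int_0^1 \max(Q_{p'}(\rv)-\eta^\star,0)\,dp'$. Since $p'\mapsto Q_{p'}(\rv)$ is nondecreasing and $Q_{p'}(\rv)\ge \eta^\star$ precisely for $p'\ge p$ (up to a null set, using $\cdf{\rv}(\eta^{\star-})\le p\le\cdf{\rv}(\eta^\star)$), this integral equals $\int_p^1 (Q_{p'}(\rv)-\eta^\star)\,dp' = (1-p)\,\psuperquantile{\rv} - (1-p)\eta^\star$ by the definition~\eqref{eq:def_superquantile}. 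Substituting back gives $\varphi(\eta^\star) = \eta^\star + \frac{1}{1-p}\big[(1-p)\psuperquantile{\rv} - (1-p)\eta^\star\big] = \psuperquantile{\rv}$, as claimed.

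The main obstacle I anticipate is the careful handling of atoms of $\rv$ at the quantile $\eta^\star$: when $\probability[\rv=\eta^\star]>0$ the set of minimizers of $\varphi$ is a nondegenerate interval and the one-sided derivatives genuinely differ, so the differentiation-under-the-expectation step and the "$Q_{p'}(\rv)\ge\eta^\star$ iff $p'\ge p$" claim must be stated modulo a $p'$-null set. Everything else — convexity, attainment, and the final substitution — is routine once the subdifferential formula is in hand.
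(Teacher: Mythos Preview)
The paper does not give its own proof of this lemma: it is quoted as a known result, attributed to \cite[Th.~1]{rockafellar2000optimization}, and used as a black box. Your proposal is a correct, self-contained argument along the classical lines (convexity of $\varphi$, computation of one-sided derivatives to identify the minimizers with the set $\{\eta:\cdf{\rv}(\eta^-)\le p\le \cdf{\rv}(\eta)\}$, and the quantile-integral representation $\rv\stackrel{d}{=}Q_U(\rv)$ to identify the optimal value with $\psuperquantile{\rv}$); this is essentially the Rockafellar--Uryasev proof. Your flagged ``obstacle'' about atoms is handled correctly: for $p'<p$ one has $Q_{p'}(\rv)\le \eta^\star$ and for $p'\ge p$ one has $Q_{p'}(\rv)\ge \eta^\star$, so the integrand $\max(Q_{p'}(\rv)-\eta^\star,0)$ already vanishes on the overlap and no null-set caveat is actually needed for the value computation.
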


% where $\argmin^{\ell}$ denotes the left-end point of the solution set of the above problem.

%----------------------------------------------------------------------
%%% REFORMULATION
%----------------------------------------------------------------------
\subsection{Reformulation as a bilevel problems}

By definition, the chance constraint in \eqref{eq:general_prob}
involves the cumulative distribution function:
% of the parameterized
% random variable $\omega \mapsto g(\x, \xi(\omega))$. More
% precisely
we have for any fixed $\x \in \Rd$,
$\probability[g(\x, \xi) \leq 0] \geq p \Leftrightarrow \cdf{g(\x,
\xi)}(0) \geq p$.
% \begin{equation*}
%
% \end{equation*}
Following the discussion of the previous section, we easily rewrite this constraint using quantiles, as formalized in
the next lemma.
\begin{lemma}\label{lem:proba_to_quantiles}
    For any $\x \in \Rd$ and $p \in [0,1)$, we have:
\begin{equation*}
    \probability[g(\x, \xi) \leq 0] \geq p \iff%\Leftrightarrow
    \pquantile{g(\x, \xi)} \leq 0.
\end{equation*}
\end{lemma}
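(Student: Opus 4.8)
The plan is to prove the equivalence by unwinding the definitions of the cumulative distribution function and the quantile function, and exploiting the right-continuity of $\cdf{\rv}$ with $\rv = g(\x,\xi)$. Fix $\x \in \Rd$ and write $\rv := g(\x,\xi)$, so that the left-hand side is $\cdf{\rv}(0) \geq p$ and the right-hand side is $\pquantile{\rv} \leq 0$, using the observation recorded just before the lemma that $\probability[g(\x,\xi)\leq 0] = \cdf{\rv}(0)$.

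First I would prove the forward implication: assume $\cdf{\rv}(0) \geq p$. Then $0$ belongs to the set $\{t \in \R : \cdf{\rv}(t) \geq p\}$ whose infimum is, by definition \eqref{eq:def_quantile}, exactly $\pquantile{\rv}$; hence $\pquantile{\rv} \leq 0$. This direction is immediate and uses nothing beyond the definition of the infimum.

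The reverse implication is the one requiring a little care, and it is where the right-continuity of $\cdf{\rv}$ enters. Assume $\pquantile{\rv} \leq 0$, i.e. $\inf\{t : \cdf{\rv}(t) \geq p\} \leq 0$. If $0$ itself lies in this set we are done; otherwise there is a sequence $t_n \downarrow \pquantile{\rv} \leq 0$ with $\cdf{\rv}(t_n) \geq p$ for each $n$, and we may assume $t_n \le 0$ for $n$ large. Since $\cdf{\rv}$ is non-decreasing, $\cdf{\rv}(0) \geq \cdf{\rv}(t_n) \geq p$ for such $n$ — in fact monotonicity alone suffices here once $t_n \le 0$, so even the right-continuity is only needed to guarantee that a decreasing sequence attaining values $\ge p$ exists arbitrarily close to the infimum, which follows because the defining set $\{t : \cdf{\rv}(t) \ge p\}$ is closed from the right. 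Either way we conclude $\cdf{\rv}(0) \geq p$.

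The main (very mild) obstacle is the boundary subtlety in the reverse direction: $\pquantile{\rv} \le 0$ does not a priori say that $\cdf{\rv}$ attains a value $\ge p$ at $0$, only that it does so at points arbitrarily close to (and possibly only strictly above) the infimum; one must invoke monotonicity together with right-continuity — equivalently, the fact that the super-level set $\{t : \cdf{\rv}(t) \ge p\}$ is of the form $[\pquantile{\rv}, \infty)$ — to transfer the inequality to $0$ itself. I would state this cleanly as a one-line remark and keep the proof to a short paragraph.
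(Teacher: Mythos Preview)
Your proof is correct and follows essentially the same route as the paper's: both directions ultimately rest on monotonicity of $F_X$ together with the consequence of right-continuity that $F_X(Q_p(X)) \geq p$ (equivalently, that $\{t : F_X(t) \geq p\} = [Q_p(X),\infty)$). The paper merely treats the two implications in the opposite order and phrases your ``forward'' direction as a contrapositive, but the mathematical content is identical.
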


\begin{proof}
By definition of the quantile and continuity on the right of the
cumulative distribution function, we always have $p \leq
\probability[g(\x, \xi) \leq \pquantile{g(\x, \xi)}]$. Thus, since
cumulative distribution functions are increasing, if
$\pquantile{g(\x, \xi)} \leq 0$, then $\probability[g(\x, \xi)
\leq \pquantile{g(\x, \xi)}] \leq \probability[g(\x, \xi) \leq 0]$
which implies that  {$\probability[g(\x, \xi) \leq 0] \geq
p$}.

Conversely, since $\pquantile{g(\x, \xi)}$ is the infimum of
$\{t\in \R \;: \probability[X \leq t] \geq p\}$, if
$\pquantile{g(\x, \xi)} > 0$, then necessarily we have
$\probability[g(\x, \xi) \leq 0] < p$.
\end{proof}

Together with\;\eqref{eq:link_theorem}, we obtain from the previous easy lemma a bilevel formulation of the general chance-constrained problem \eqref{eq:general_reformulation}. The idea is simple: introducing an auxiliary variable $\eta\in \Rd$ to recast the potentially
non-convex chance constraint of \eqref{eq:general_prob} as two
constraints, a simple bound constraint and a difficult optimality constraint, forming a lower subproblem.
Introducing the lower objective function $G: \setX
\times \R \rightarrow \R$
\begin{equation}\label{eq:G}
    G(\x, s) = s + \frac{1}{1-p} \expectation[\max(g(\x,\xi) - s, 0)],
\end{equation}
we have the following exact reformulation of chance-constrained problems.
\begin{theorem}\label{thm:bilevel_reformulation}
    Problem \eqref{eq:general_prob} is equivalent to the bilevel problem:
    \begin{equation}\label{eq:general_reformulation}
    \left\{
    \displaystyle\begin{array}{ll}
        \min_{\x \in \setX, \eta \in \R} & f(\x) \\
        \text{s.t.} & \eta \leq 0 \\
         & \eta \in S(\x) = \argmin_{s \in \R} G(\x,s).\\
    \end{array}
\right.
\end{equation}
More precisely, if $\x^\star$ is an optimal solution of \eqref{eq:general_prob},
then $(\x^\star, \pquantile{g(\x^\star, \xi)})$ is an optimal
solution of the above bilevel problem, and conversely.
\end{theorem}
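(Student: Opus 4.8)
The plan is to prove the equivalence first at the level of feasible sets, and then transfer it to optimal solutions using that the upper-level objective $f(\x)$ does not involve the auxiliary variable $\eta$. Throughout I use that, for fixed $\x \in \setX$, the one-dimensional functional $\eta \mapsto \eta + \frac{1}{1-p}\expectation[\max(g(\x,\xi)-\eta,0)]$ appearing in Lemma~\ref{lem:rock_cvar} (applied to the integrable random variable $\rv = g(\x,\xi)$) is precisely $G(\x,\cdot)$ from \eqref{eq:G}.

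\textbf{Step 1 (feasibility correspondence, easy direction).} I would first show that $\x \in \setX$ feasible for \eqref{eq:general_prob} implies there exists $\eta \in \R$ with $(\x,\eta)$ feasible for \eqref{eq:general_reformulation}. By Lemma~\ref{lem:proba_to_quantiles}, feasibility of $\x$ means $\pquantile{g(\x,\xi)} \le 0$. By Lemma~\ref{lem:rock_cvar}, $\pquantile{g(\x,\xi)}$ is an optimal solution of $\inf_s G(\x,s)$, i.e.\ $\pquantile{g(\x,\xi)} \in S(\x)$. Hence $\eta := \pquantile{g(\x,\xi)}$ satisfies both $\eta \le 0$ and $\eta \in S(\x)$, so $(\x,\eta)$ is feasible for the bilevel problem.

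\textbf{Step 2 (feasibility correspondence, the delicate direction).} For the converse I need: if $S(\x)$ contains some $\eta \le 0$, then $\pquantile{g(\x,\xi)} \le 0$, hence $\x$ is feasible for \eqref{eq:general_prob}. This requires slightly more than Lemma~\ref{lem:rock_cvar} as stated, namely that $\pquantile{g(\x,\xi)}$ is the \emph{left endpoint} of the (closed, since $G(\x,\cdot)$ is convex) interval $S(\x)$. I would establish this by a direct monotonicity computation: the one-sided derivatives of $s \mapsto G(\x,s)$ equal $\tfrac{1}{1-p}\big(\cdf{g(\x,\xi)}(s) - p\big)$ (up to the usual care at atoms), so $G(\x,\cdot)$ is nonincreasing on $(-\infty,\pquantile{g(\x,\xi)})$ and nondecreasing on $(\pquantile{g(\x,\xi)},+\infty)$, which together with Lemma~\ref{lem:rock_cvar} forces $\min S(\x) = \pquantile{g(\x,\xi)}$; alternatively one may cite the sharper form of \cite[Th.~1]{rockafellar2000optimization}, where the minimizer set is shown to be $[\,\pquantile{g(\x,\xi)},\,Q_p^{+}(g(\x,\xi))\,]$ with $Q_p^{+}$ the upper $p$-quantile. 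Granting this, $\eta \in S(\x)$ with $\eta \le 0$ gives $\pquantile{g(\x,\xi)} = \min S(\x) \le \eta \le 0$, and Lemma~\ref{lem:proba_to_quantiles} concludes. I expect this monotonicity/argmin-interval fact to be the only genuine obstacle; the rest is bookkeeping.

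\textbf{Step 3 (from feasible sets to optimal solutions).} Since the upper objective depends only on $\x$, Steps~1--2 show that the images under $f$ of the two feasible sets coincide, so \eqref{eq:general_prob} and \eqref{eq:general_reformulation} share the same optimal value $v^\star$ (with the convention that both are infeasible simultaneously). If $\x^\star$ solves \eqref{eq:general_prob}, then $\x^\star$ is feasible, so by Step~1 the point $(\x^\star,\pquantile{g(\x^\star,\xi)})$ is feasible for \eqref{eq:general_reformulation} with value $f(\x^\star) = v^\star$, hence optimal. Conversely, if $(\x^\star,\eta^\star)$ solves \eqref{eq:general_reformulation}, then $\eta^\star \le 0$ and $\eta^\star \in S(\x^\star)$, so by Step~2 $\x^\star$ is feasible for \eqref{eq:general_prob} with value $f(\x^\star) = v^\star$, hence optimal for \eqref{eq:general_prob}. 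This establishes the claimed correspondence in both directions.
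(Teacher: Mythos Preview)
Your proposal is correct and follows essentially the same route as the paper: both arguments pass through the intermediate problem with the constraint $\eta = \pquantile{g(\x,\xi)}$, use Lemma~\ref{lem:rock_cvar} to show $\pquantile{g(\x,\xi)} \in S(\x)$ for the forward direction, and use $\pquantile{g(\x,\xi)} \le \eta$ for any $\eta \in S(\x)$ for the converse. The only difference is that you explicitly justify this last inequality (your Step~2, identifying $\pquantile{g(\x,\xi)} = \min S(\x)$ via monotonicity or the sharper Rockafellar--Uryasev statement), whereas the paper simply asserts ``$\pquantile{g(\x^\star,\xi)} \le \eta^\star$'' without further comment; your added care here is warranted, since Lemma~\ref{lem:rock_cvar} as stated only guarantees that the quantile is \emph{an} optimal solution.
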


\begin{proof}
It is clear with Lemma \ref{lem:proba_to_quantiles} that problem
\eqref{eq:general_prob} is equivalent to
\begin{equation}\label{eq:pb_quantiles_non_convex}
\left\{
\begin{array}{ll}
        \displaystyle\min_{\x \in \setX, \eta \in \R} & f(\x)  \\
        \text{s.t.} & \eta \leq 0 \\
         & \eta = \pquantile{g(\x, \xi)}
\end{array}
\right. .
\end{equation}
By Lemma~\ref{lem:rock_cvar}, $\pquantile{g(\x, \xi)} \in S(\x)$ for any $\x \in
\R^d$. Hence, any solution $(x,\eta)$ of
$\eqref{eq:pb_quantiles_non_convex}$ is feasible for
\eqref{eq:general_reformulation}. Conversely, any solution
$(\x^{\star}, \eta^{\star})$ of \eqref{eq:general_reformulation}
satisfies: $\pquantile{g({\x}^{\star}, \xi)} \leq \eta^{\star}
\leq 0$ which implies that $(\x^\star, \pquantile{g({\x}^{\star},
\xi)}$ is a feasible point of \eqref{eq:pb_quantiles_non_convex}.
Since both problems have the same objective, they are equivalent.
\end{proof}

The first constraint $\eta \leq 0$ is an easy
one-dimensional bound constraint which does not involve the decision
variable $\x$. The second constraint, which constitutes the lower
level problem is more difficult; when this constraint is
satisfied, $\eta$ is exactly the $p$-quantile of $g(\x, \xi)$.
We readily see the joint convexity of the objective function of
the lower level problem in \eqref{eq:general_reformulation} with
respect to $s$ and $\x$.

This bilevel reformulation is nice, natural and seemingly new;
we believe that it opens the door to new approaches for solving chance-constrained problems.
In the next section, we propose such an approach based on the reformulation.

\section{A double penalization scheme for chance constrained problems}\label{sec:penalty}

%----------------------------------------------------------------------
%%% INTRODUCTION
%----------------------------------------------------------------------
% !TEX root = ./Main.tex

In this section, we explore one possibility offered by the bilevel formulation of chance-constrained problems, presented in the previous section. We propose a (double) penalization approach for solving the bilevel optimization problem,
%the nonsmooth nonconvex problem,
with a different treatment of the two constraints: a basic penalization of the easy constraint together with
an exact penalization of the hard constraint formalized as the lower problem. We first derive in Section\;\ref{sec:analysis_vf},
some growth properties of the lower problem.
We show then in Section\;\ref{sec:exact} to what extent these
properties help to provide an exact penalization of the ``hard"
constraint. We finally present the double penalty scheme
in section \ref{sec:convergence_analysis}.

From the bilevel problem\;\eqref{eq:general_reformulation}, we derive the two following penalized problems, associated with two penalization parameters $\mu,
\lambda > 0$ and
\begin{equation}\label{eq:problem_pmu}
    (P_{\mu}) \quad \left\{
    \begin{array}{ll}
        \displaystyle\min_{(\x,\eta) \in \setX \times \R} & f(\x) + \mu \max(\eta, 0) \\
        \mbox{s.t.} & \eta \in \argmin_{s\in \R} G(\x, s)
    \end{array}
\right.
\end{equation}
\vspace*{-1ex}
and
\begin{equation}\label{eq:def_penalized_pb}
    \Plmu \quad \min_{(\x, \eta) \in \setX \times \R} f(\x) + \lambda \left(G(\x,\eta) - \min_{s \in \R} G(\x, s)\right) + \mu \max(\eta, 0).
\end{equation}
We consider a general data-driven situation where the uncertainty
$\xi$ is just known through a sample (or, said
alternatively, follows an equiprobable discrete distribution over
$n \in \N$ arbitrary values): we assume that there exists $\xi_1, \xi_2, \dots, \xi_n \in \Rm$ such that $\probability[\xi = \xi_i] = \frac{1}{n}$ for all $i \in \{1, \dots, n\}$. The set
$\indexset$ defined as
\begin{equation}\label{eq:def_index_set}
    \indexset = \left\{\frac{i}{n}, \quad i \in \{0,...,n-1\}
    \right\}
\end{equation}
plays a special role in our developments. In particular, we use the distance to~$\indexset$, denoted by $\dist{\indexset}(p)$, to define a key quantity appearing in the variational results of this section: we introduce
%This quantity is the following
    \begin{equation}\label{eq:def_delta}
        \delta = \left\{
        \begin{array}{ll}
            \displaystyle\frac{1}{n(1-p)} &\mbox{ ~~if }p \in \indexset \\[2ex]
            \displaystyle\frac{\dist{\indexset}(p)}{(1-p)} &\mbox{ ~~otherwise,}\\
        \end{array}
        \right.
    \end{equation}
which depends implicitly on the number of samples $n$ and the fixed safety parameter $p$.

%----------------------------------------------------------------------
%%% ANALYSIS VF
%----------------------------------------------------------------------
\subsection{Analysis of the Value function}\label{sec:analysis_vf}

In view of the forthcoming exact penalization,
we study here the value function $h : \setX \times \R \rightarrow \R$ defined, from $G$ in \eqref{eq:G}, as
\begin{equation}
h(\x,\eta) = G(\x, \eta) - \min_{s\in \R} G(\x,s).
\label{eq:herrfct}
\end{equation}
The next result relates $h$ to $\dist{S(\x)}(\cdot)$, the distance function to $S(\x)$, the solution set of the lower level problems in \eqref{eq:general_reformulation}. This is our main technical result, on which next propositions are based.

\begin{theorem}\label{uniform_parametric_error_bound}
Let $p\in [0,1)$ be fixed but arbitrary. The function
$h$ defined in\;\eqref{eq:herrfct} satisfies for any $(\x, \eta)
\in \setX \times \R$
\begin{equation*}
                h(x,\eta) \geq \delta\;\dist{S(\x)}(\eta)
                \qquad\text{with $\delta$ defined by \eqref{eq:def_delta}.}
\end{equation*}
\end{theorem}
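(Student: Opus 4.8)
The plan is to exploit the very explicit structure of $G(\x,\cdot)$: for fixed $\x$, the function $s \mapsto G(\x,s) = s + \frac{1}{1-p}\expectation[\max(g(\x,\xi)-s,0)]$ is a one-dimensional convex piecewise-linear function (since $\xi$ is equiprobable on $n$ atoms, $g(\x,\xi)$ takes at most $n$ values $v_1 \le \cdots \le v_n$). Its derivative in $s$ jumps at each $v_i$, and by Lemma~\ref{lem:rock_cvar} its minimum is attained at the $p$-quantile. So I would first write $G(\x,\cdot)$ down as a max of finitely many affine functions, compute its left and right derivatives explicitly: for $s$ strictly between two consecutive distinct values, the slope of $G(\x,\cdot)$ is $1 - \frac{1}{1-p}\cdot\frac{\#\{i : v_i > s\}}{n}$. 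The key observation is that, because $p \in [0,1)$, this slope is bounded away from $0$ on either side of the solution set $S(\x)$, and the size of that gap is controlled from below by $\delta$.

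The second step is a one-dimensional error-bound argument. Let $\eta$ be arbitrary and let $\bar\eta = \mathrm{proj}_{S(\x)}(\eta)$; since $G(\x,\cdot)$ is convex with minimum value $\min_s G(\x,s)$ attained on the interval $S(\x)$, we have $h(\x,\eta) = G(\x,\eta) - G(\x,\bar\eta) \ge m(\x)\,\abs{\eta-\bar\eta} = m(\x)\,\dist{S(\x)}(\eta)$, where $m(\x)$ is the infimum over $s \notin S(\x)$ of the absolute slope of $G(\x,\cdot)$ — i.e.\ the magnitude of the one-sided derivative of $G(\x,\cdot)$ just outside the optimal interval. This is the standard fact that a convex function grows at least linearly away from its argmin at a rate given by the smallest nonzero subgradient magnitude at the boundary of the argmin set. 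So everything reduces to proving the uniform lower bound $m(\x) \ge \delta$ for all $\x \in \setX$.

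The third and main step — and the place where the case split in the definition of $\delta$ comes from — is bounding $m(\x)$. Just to the right of $S(\x)$ the slope is $1 - \frac{1}{1-p}\cdot\frac{k}{n}$ where $k = \#\{i : v_i > \sup S(\x)\}$, and just to the left it is $\frac{1}{1-p}\cdot\frac{\ell}{n} - 1$ where $\ell = \#\{i : v_i < \inf S(\x)\}$... more precisely $\frac{n-\ell}{n} \ge 1-p$ fails, so one must track which integers $k/n, \ell/n$ can occur. Because the quantile sits at a point where $\cdf{g(\x,\xi)}$ crosses level $p$, the admissible slopes on the right are of the form $\frac{j/n - p}{1-p}$ with $j/n > p$, $j \in \{0,\dots,n\}$, and symmetrically on the left $\frac{p - j/n}{1-p}$ with $j/n \le p$ (using $j/n \in \indexset \cup \{1\}$ roughly). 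The smallest positive value of $\abs{j/n - p}/(1-p)$ over the relevant integers $j$ is exactly $\frac{1}{n(1-p)}$ when $p \in \indexset$ (the nearest grid point is at distance $1/n$, but when $p$ itself is a grid point the relevant neighbour on at least one side is still $1/n$ away, and one has to check the degenerate ``flat'' case where $S(\x)$ is a nondegenerate interval does not hurt) and $\frac{\dist{\indexset}(p)}{1-p}$ otherwise. Carrying out this discrete minimization carefully, uniformly in $\x$, is the crux: the subtlety is that $S(\x)$ may be a single point or an interval, that some of the values $v_i$ may coincide, and that when $p \notin \indexset$ one must verify the nearest achievable frequency $j/n$ to $p$ is at distance exactly $\dist{\indexset}(p)$ on the side that matters. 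I expect the bookkeeping of which integers $j$ are attainable as $\#\{i : v_i > s\}/n$ near the argmin — and handling ties and interval-valued $S(\x)$ — to be the main obstacle; once that combinatorial lemma is in place, the theorem follows by combining it with the convex error-bound inequality of step two.
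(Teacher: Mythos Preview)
Your plan is correct and is essentially a tidier reorganization of the paper's own proof: the paper performs the same case analysis ($\eta$ above or below $q_p$, then $p\in\mathcal{I}_n$ versus $p\notin\mathcal{I}_n$, introducing successor and predecessor quantiles $q_p^{\pm}$ to delimit $S(x)$) via direct manipulation of the expectation defining $h$, and arrives at the very slope quantities $\tfrac{j/n - p}{1-p}$ that your Step~3 isolates. Your Step~2 names the convex error-bound inequality explicitly where the paper leaves it implicit in the casework; the formula you write for the left slope is slightly off (it should be $\tfrac{\ell/n - p}{1-p}$ with $\ell = \#\{i : v_i < \inf S(x)\}$, not $\tfrac{\ell}{n(1-p)} - 1$), but you immediately recover the correct form, so the substance matches.
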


\begin{proof}
Let us fix $\x \in \setX$ and denote by $q_p$ the $p$-quantile of
$g(\x, \xi)$. We first note that by the arguments in the proof of
Lemma \ref{lem:proba_to_quantiles}, we have
\begin{equation*}
    p \leq \probability[g(\x, \xi) \leq q_p],
\end{equation*}
with equality holding in the left inequality, if and only if
$p$ belongs to\;$\indexset$.

For any fixed but arbitrary $\eta \in \R$, we have the
following identity:
\begin{equation*}
\begin{split}
    h(\x,\eta) &= \eta + \frac{1}{1-p} \mathbb{E}[\max(g(\x, \xi) - \eta, 0)] -   \left( q_p + \frac{1}{1-p} \mathbb{E}[\max(g(\x, \xi) - q_p, 0)] \right) \\
    &= (\eta - q_p) + \frac{1}{1-p} \mathbb{E} \left[\max(g(\x,\xi), \eta) - \eta - (\max(g(\x,\xi), q_p) - q_p) \right] \\
    &= (\eta - q_p) (1 - \frac{1}{1-p}) + \frac{1}{1-p} \mathbb{E} \left[\max(g(\x,\xi), \eta) - \max(g(\x,\xi), q_p) \right].\\
\end{split}{}
\end{equation*}

Now, by employing a case distinction on the location of $\eta$
with respect to $q_p$, we will derive the desired inequalities.
Let us first consider that $\eta > q_p$, then we have:
\begin{equation*}
    \begin{split}
        h(\x,\eta) &=  (\eta - q_p) (1 - \frac{1}{1-p}) + \frac{1}{1-p} \mathbb{E} \left[ (\eta - g(\x, \xi)) \one_{q_p< g(\x,\xi) \leq \eta} + (\eta - q_p) \one_{g(\x,\xi) \leq q_p} \right]\\
        &= (\eta - q_p) (1 - \frac{1}{1-p} + \frac{1}{1-p} \probability[g(\x, \xi) \leq q_p]) + \frac{1}{1-p} \expectation[(\eta - g(\x,\xi)) \one_{q_p < g(\x,\xi) \leq \eta}] %\\
        % &= \frac{(\eta - q_p)}{1-p} \!\left(\probability[g(\x, \xi) \leq q_p] - p + \expectation \left [\frac{\eta - g(\x, \xi)}{\eta - q} \one_{q_p < g(\x,\xi) \leq \eta} \right ]
        % \right)\!.
    \end{split}
\end{equation*}
which finally gives:
\begin{equation}\label{eq:error_first_case}
        h(\x,\eta)= \frac{(\eta - q_p)}{1-p} \!\left(\probability[g(\x, \xi) \leq q_p] - p + \expectation \left [\frac{\eta - g(\x, \xi)}{\eta - q} \one_{q_p < g(\x,\xi) \leq \eta} \right ]
        \right)\!.
\end{equation}
Now if $\probability[g(\x, \xi) \leq q_p] > p$, this implying that
$p \notin \indexset$, then by non-negativity of the expectation
term above, we have:
\begin{equation*}
h(\x, \eta) \geq (\eta - q_p) \frac{1}{1-p} \left
(\probability[g(\x, \xi) \leq q_p] - p \right ) \geq
\frac{\dist{\indexset}(p)}{1-p}\; \dist{S(\x)}(\eta).
\end{equation*}
Here we use that clearly $\abs{\eta - q_p} \geq
\dist{S(\x)}(\eta)$, since $q_p \in S(x)$ as already recalled.
Furthermore $p \leq \probability[g(\x, \xi) \leq q_p] < p + \frac
1n$ by Lemma \ref{lem:proba_to_quantiles} and by definition. We also observe that
$\probability[g(\x, \xi) \leq q_p] \in \indexset$, so that altogether we have:
\begin{align}
0 &\leq (\probability[g(\x, \xi) \leq q_p] - p) \notag\\
(\probability[g(\x, \xi) \leq q_p] - p) &\geq \dist{\indexset}(p) \notag\\
\dist{\indexset}(p) &\leq \tfrac{1}{n}\;.\notag
\end{align}
If to the contrary $p \in \indexset$ which implies that
$\probability[g(\x, \xi) \leq q_p] = p$, then $h(x,\eta) =
\frac{1}{1-p}\expectation[(\eta - g(\x, \xi)) \one_{q_p <
g(\x,\xi) \leq \eta}]$. We let $q_p^{+}$ be the successor
quantile, i.e.,
\begin{equation*}
q_p^{+} = \inf\{t \geq \R :\; \probability[g(\x, \xi)\leq t] > p
\}.
\end{equation*}
Since $p \leq \frac{n-1}{n}$, it follows that $q_p^+ < \infty$ and
$q_p^+ > q_p$. Now if $\eta \in (q_p, q_p^{+})$, we have $h(x,
\eta) = 0$. If $\eta \geq q_p^{+}$, then
\begin{equation*}
\begin{split}
h(x,\eta)   &= \frac{1}{1-p} \expectation \left [(\eta - g(\x, \xi)) \one_{q_p^{+} \leq g(\x,\xi) \leq \eta} \right ] \\
                            & \geq (\eta - q_p^{+}) \frac{\probability[g(\x,\xi) = q_p^{+}]}{1-p} \geq \frac{1}{n(1-p)} \dist{S(\x)}(\eta) \\
                            & \geq \frac{\dist{\indexset}(p)}{(1-p)}
                            \dist{S(\x)}(\eta),
\end{split}
\end{equation*}
where the last inequality results from our earlier estimates.

The second case to consider involves the situation $\eta < q_p$.
Here, we have:
\begin{equation*}%\label{eq:error_second_case}
    \begin{split}
        h(\x, \eta) &=  (\eta - q_p) (1 - \frac{1}{1-p}) + \frac{1}{1-p} \expectation \left[(g(\x, \xi) - q_p) \one_{\eta< g(\x,\xi) \leq q_p} + (\eta - q_p) \one_{g(\x,\xi) \leq \eta} \right]\\
        &= (\eta - q_p)\left (1 - \frac{1}{1-p} + \frac{1}{1-p} \probability[g(\x, \xi) \leq \eta] \right ) + \frac{1}{1-p} \expectation \left[(g(\x, \xi) - q_p) \one_{\eta< g(\x,\xi) \leq q_p}\right]
        %\\
        % &=  \frac{(q_p - \eta)}{1-p} \left(p - \probability[g(\x, \xi) \leq \eta] - \expectation \left[\frac{q_p - g(\x, \xi)}{q_p - \eta} \one_{\eta< g(\x,\xi) \leq q_p}\right]\right). \\
        % &= (q_p - \eta) \frac{1}{1-p} \left(p - \probability[g(\x, \xi) \leq \eta] - \expectation \left[\frac{q_p - g(\x, \xi)}{q_p - \eta} \one_{\eta< g(\x,\xi) < q_p}\right]\right) \\
    \end{split}
\end{equation*}
which leads us to
\begin{equation}\label{eq:error_second_case}
        h(\x, \eta) =  \frac{(q_p - \eta)}{1-p} \!\left(\!p - \probability[g(\x, \xi) \leq \eta] - \expectation \!\left[\frac{q_p - g(\x, \xi)}{q_p - \eta}\! \one_{\eta< g(\x,\xi) \leq q_p}\right]\!\right)\!.
\end{equation}
Now if $\probability[g(\x, \xi) \leq q_p] > p$, this implies that
$p \notin \indexset$, then let us define the antecessor quantile
$q_p^-$ as
\begin{equation*}
q_p^{-} = \max\left\{\sup\{t \geq \R :\; \probability[g(\x, \xi)\leq t]
< p \}, \min\{g(x,\xi_i)\}_{i=1}^n - 1 \right \}.
\end{equation*}
We can first observe that since $p \notin \indexset$, we can
entail $p > 0$, hence $q_p^- > -\infty$ is well defined. For any
$\eta \in (q_p^-, q_p)$, it follows that $\expectation
\left[\frac{q_p - g(\x, \xi)}{q_p - \eta} \one_{\eta< g(\x,\xi)
\leq q_p}\right] = 0$. We may thus consider that $\eta \leq
q_p^-$, in which case we have:
\begin{equation*}
\begin{split}
h(\x, \eta) &= (q_p - \eta) \frac{1}{1-p} \left(p -
\probability[g(\x, \xi) \leq \eta] - \expectation \left[\frac{q_p
- g(\x, \xi)}{q_p - \eta} \one_{\eta< g(\x,\xi) \leq
q_p^-}\right]\right) \\
&\geq (q_p - \eta) \frac{1}{1-p} \left(p - \probability[g(\x, \xi) \leq \eta] - \expectation \left[\one_{\eta< g(\x,\xi) \leq q_p^-}\right]\right) \\
            &\geq (q_p - \eta) \frac{1}{1-p}  \left(p - \probability[g(\x, \xi) \leq q_p^-]\right) \\
            &\geq \frac{1}{n(1-p)}\; \dist{S(\x)} (\eta) \geq \frac{\dist{\indexset}(p)}{1-p}\; \dist{S(\x)}(\eta),
\end{split}
\end{equation*}
where we have used that $\frac{q_p - g(x,\xi)}{q_p - \eta} \leq 1$
on $\one_{\eta < g(x,\xi) \leq q_p}$.

If to the contrary, $p\in \indexset$, which implies
$\probability[g(\x, \xi) \leq q_p] = p$, recalling the identity $p
- \probability[g(\x, \xi) \leq \eta] = \expectation[ \one_{\eta <
g(x,\xi) < q_p} ] + \probability[g(\x, \xi) = q_p]$, we obtain:
        \begin{equation*}
            \begin{split}
                h(\x,\eta) &= (q_p - \eta) \frac{1}{1-p} \left( \expectation \left[ 1 - \frac{(q_p - g(\x, \xi) )}{q_p - \eta} \one_{\eta< g(\x,\xi) < q_p}\right] + \probability[g(\x, \xi) = q_p] \right)\\
                &\geq (q_p - \eta) \frac{\probability[g(\x, \xi) = q_p]}{1-p}\\
&\geq \frac 1n \frac{1}{1-p}\dist{S(\x)}(\eta) \geq
\frac{\dist{\indexset}(p)}{1-p}\; \dist{S(\x)}(\eta),
            \end{split}
        \end{equation*}
where we have used that $\probability[g(\x, \xi) = q_p] = \frac
1n$. The last case $\eta = q_p$, gives by construction $\eta \in
S(x)$, i.e., $\dist{S(\x)}(\eta) = 0$ and clearly $h(x,\eta)=0$ so
that the desired inequality holds.
\end{proof}

Following the terminology of \cite{ye1997exact}, this theorem shows that $h$ is
%constitutes
a uniform parametric error bound.
%which is our main technical result.
We note that the quality of this bound
%in this result
is altered by the
number $n$ of data points considered. This drawback actually
passes to the limit in the sense that $(x,\eta) \mapsto h_x(\eta)$
fails to be a uniform parametric error bound when $\xi$ follows a
continuous distribution; this is an interesting but secondary result that we prove in Appendix\;\ref{app:continuous_case}.

%----------------------------------------------------------------------
%%% EXACT PENALIZATION
%----------------------------------------------------------------------
\subsection{An exact penalization for the hard constraint}\label{sec:exact}
We show here that $\Plmu$ is an exact penalization of
$(P_{\mu})$, when $\lambda$ is large enough. The proof of this result follows usual rationale (see e.g.,\;\cite[Prop.\;2.4.3]{clarke1990optimization}); the main technicality is the sharp growth of $h$ established in Theorem\;\ref{uniform_parametric_error_bound}.

\begin{proposition}\label{prop:exact_penalization}
Let $\mu > 0$ be given and assume that there is a solution to $(P_{\mu})$ defined in \eqref{eq:problem_pmu}. Then for any $\lambda > \mu/\delta$ with $\delta$ defined in\;\eqref{eq:def_delta}, the solution set of $(P_{\mu})$ coincides with the one of $(P_{\lambda, \mu})$ defined in \eqref{eq:def_penalized_pb}.
\end{proposition}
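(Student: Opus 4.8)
The plan is to follow the classical exact-penalization argument (in the spirit of \cite[Prop.\;2.4.3]{clarke1990optimization}), with Theorem~\ref{uniform_parametric_error_bound} supplying the quantitative estimate that pins down the threshold $\mu/\delta$. Write $\phi(\x,\eta):=f(\x)+\mu\max(\eta,0)$ for the part common to the two objectives: then $(P_\mu)$ minimizes $\phi$ over $\{(\x,\eta)\in\setX\times\R:\,h(\x,\eta)=0\}$, while $\Plmu$ minimizes $\phi+\lambda h$ over all of $\setX\times\R$, where $h$ is the value function of \eqref{eq:herrfct}. I would first record the three facts I need about $h$ and $S$: that $h\geq 0$ everywhere (Lemma~\ref{lem:rock_cvar}), that $h(\x,\eta)=0$ precisely when $\eta\in S(\x)$, and that $S(\x)$ is a nonempty closed interval of $\R$ — nonempty because $\pquantile{g(\x,\xi)}\in S(\x)$ by Lemma~\ref{lem:rock_cvar}, closed and convex because $G(\x,\cdot)$ is convex, hence continuous. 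Consequently the projection $\bar\eta:=\mathrm{proj}_{S(\x)}(\eta)$ is well defined for every $(\x,\eta)$, with $|\eta-\bar\eta|=\dist{S(\x)}(\eta)$.

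The easy half is that any solution $(\hat\x,\hat\eta)$ of $(P_\mu)$ — which exists by hypothesis, so the common optimal value $v:=\phi(\hat\x,\hat\eta)$ is finite — is feasible for $\Plmu$ and, since $h(\hat\x,\hat\eta)=0$, has $\Plmu$-objective equal to $v$ there; thus $\mathrm{val}(\Plmu)\le v$. The substance is the matching lower bound. Fixing an arbitrary $(\x,\eta)\in\setX\times\R$ and projecting $\eta$ onto $S(\x)$, I would use that $t\mapsto\max(t,0)$ is $1$-Lipschitz to get $\mu\max(\eta,0)\ge\mu\max(\bar\eta,0)-\mu\,\dist{S(\x)}(\eta)$, and then Theorem~\ref{uniform_parametric_error_bound} to replace $\dist{S(\x)}(\eta)$ by $h(\x,\eta)/\delta$, yielding
\[
\phi(\x,\eta)+\lambda h(\x,\eta)\;\ge\;\phi(\x,\bar\eta)+\Bigl(\lambda-\tfrac{\mu}{\delta}\Bigr)h(\x,\eta)\;\ge\;v+\Bigl(\lambda-\tfrac{\mu}{\delta}\Bigr)h(\x,\eta),
\]
where the last step uses that $(\x,\bar\eta)$ is feasible for $(P_\mu)$. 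For $\lambda\ge\mu/\delta$ the final term is nonnegative, so $\phi+\lambda h\ge v$ pointwise and hence $\mathrm{val}(\Plmu)=v$.

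For the strict choice $\lambda>\mu/\delta$, the two solution sets then read off from the same chain. If $(\x^\star,\eta^\star)$ solves $\Plmu$, both displayed inequalities are equalities at $(\x^\star,\eta^\star)$; since $\lambda-\mu/\delta>0$ and $h\ge 0$, the second forces $h(\x^\star,\eta^\star)=0$, i.e.\ $\eta^\star\in S(\x^\star)$, so $(\x^\star,\eta^\star)$ is feasible for $(P_\mu)$ with objective $v$ and therefore solves $(P_\mu)$; conversely every solution of $(P_\mu)$ is feasible for $\Plmu$ with objective $v=\mathrm{val}(\Plmu)$ and hence optimal for it, so the two solution sets coincide. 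I expect the only genuinely delicate points to be the well-definedness of the projection onto $S(\x)$ (and the precise behaviour of that interval, e.g.\ when $S(\x)$ is a half-line, which still leaves the projection defined) and the clean invocation of Theorem~\ref{uniform_parametric_error_bound}; everything else is routine exact-penalization bookkeeping, and the existence assumption on $(P_\mu)$ is used only to guarantee $v<\infty$ and a minimizer attaining it.
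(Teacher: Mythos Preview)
Your proof is correct and rests on the same two ingredients as the paper's --- the Lipschitz property of $t\mapsto\max(t,0)$ and the error bound of Theorem~\ref{uniform_parametric_error_bound} --- but your organization is tighter. The paper argues the first inclusion by contradiction (approximating the projection onto $S(\x')$ up to an $\varepsilon/(2\mu)$ slack) and then, for the reverse inclusion, uses a two-parameter trick: it compares $(P_{\lambda,\mu})$ with $(P_{(\lambda+\lambda_\mu)/2,\mu})$ to squeeze out $h(\bar\x,\bar\eta)=0$. You instead derive the single pointwise inequality
\[
\phi(\x,\eta)+\lambda h(\x,\eta)\;\ge\;v+\bigl(\lambda-\tfrac{\mu}{\delta}\bigr)h(\x,\eta)
\]
directly, which simultaneously yields $\mathrm{val}\Plmu=v$ and, for $\lambda>\mu/\delta$, forces $h=0$ at any minimizer. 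This avoids both the contradiction and the auxiliary penalty level, and as a bonus makes explicit that $\mathrm{val}\Plmu=v$ already for $\lambda=\mu/\delta$ (only the identification of solution sets needs strict inequality). Your care about $S(\x)$ being closed (possibly a half-line when $p=0$) so that the exact projection exists is also a small improvement over the paper's $\varepsilon$-approximate projection.
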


\begin{proof}
Take $\mu>0$, define $\lambda_\mu= \mu/\delta$, and take $\lambda > \lambda_{\mu}$ arbitrary but fixed.
Let us first take a solution $({\x}^{\star}, \eta^{\star}) \in
\setX \times \R$ of $(P_{\mu})$ and show by contradiction that it is
also a solution of $(P_{\lambda, \mu})$. Indeed, to the contrary,
assume there exists  some $\varepsilon > 0$ and $({\x}', \eta') \in \setX
\times \R$ such that:
    \begin{equation*}
        f({\x}') + \mu \max(0, \eta') + \lambda h_{{\x}'}(\eta') \leq f({\x}^{\star}) + \mu \max(0, \eta^{\star}) + \lambda\; h_{{\x}^{\star}}(\eta^{\star}) -
        \varepsilon.
    \end{equation*}

Let then $\eta_p' \in S({\x}')$ be such that : $|\eta_p' - \eta'|
\leq \dist{S({\x}')}(\eta') + \frac{\varepsilon}{2 \mu}$. Then the
point $({\x}', \eta_p')$ is a feasible for $P_{\mu}$ (recall
$\eta_p' \in S({\x}')$) and since $\eta \mapsto \mu \max(0, \eta)$
is $\mu$-Lipschitz, we first have
    \begin{equation*}
    \begin{split}
        f({\x}') + \mu \max(0, \eta_p') &\leq   f({\x}') + \mu \max(\eta',0) + \mu |\eta_p' - \eta'|\\
        &\leq f({\x}') + \mu \max(\eta',0) + \mu \left( \dist{S({\x}')}(\eta') + \frac{\varepsilon}{2 \mu} \right).
    \end{split}{}
    \end{equation*}
Using Theorem \ref{uniform_parametric_error_bound}, we then have
    \begin{equation*}
    \begin{split}
        f({\x}') + \mu \max(0, \eta_p') &\leq%\stackrel{Proposition \ref{uniform_parametric_error_bound}}{\leq}
        f({\x}') + \mu \max(\eta',0) + \mu \left(\frac{1}{\delta} h(\x',\eta') + \frac{\varepsilon}{2 \mu}\right)\\
        &\leq  f({\x}') + \mu \max(\eta',0) + \lambda_{\mu}\; h({\x}',\eta') + \frac{\varepsilon}{2}\\
        % &\leq  f({\x}') + \mu \max(\eta',0) + \lambda_ \; h(\x',\eta') + \frac{\varepsilon}{2}\\
        &\leq  f({\x}^{\star}) + \mu \max(\eta^{\star},0) - \frac{\varepsilon}{2}\\
    \end{split}{}
    \end{equation*}
which gives the contradiction. Hence any solution of $(P_\mu)$ is
also a solution to problem $(P_{\lambda, \mu})$.

Let now $(\bar{\x}, \bar{\eta})$ be
a solution of $(P_{\lambda, \mu})$ and let us show that it is actually a solution for $P_{\mu}$. Let again $({\x}^{\star}, \eta^{\star})$ be an arbitrary solution of $(P_{\mu})$. We first note that that a result of optimality of $(\bar x, \bar \eta)$ for $(P_{\lambda, \mu})$, we have:
\[
    f(\bar{\x}) + \mu \max(0,\bar \eta) + \lambda \underbrace{h(\bar \x, \bar \eta)}_{\geq 0} \leq f({\x}^\star) + \mu \max(0,\eta^\star) + \lambda \underbrace{h({\x}^\star, \eta^\star)}_{=0},
\]
which by positivity of the function $h$ and feasibility for $(P_{\mu})$, i.e., $h(x^\star, \eta^\star)=0$ of $({\x}^{\star}, \eta^{\star})$ yields:
\[
    f(\bar{\x}) + \mu \max(0,\bar \eta) \leq f({\x}^\star) + \mu \max(0, \eta^\star).
\]

    It remains to show that $(\bar \x, \bar \eta)$ is a feasible point for $(P_\mu)$. By the first point, $({\x}^{\star}, \eta^{\star})$ is both a solution of $(P_{\lambda, \mu})$ and $(P_{\frac{\lambda + \lambda_{\mu}}{2}, \mu})$. Hence, we have:
    \begin{equation*}
    \begin{split}
        f(\bar{\x}) + \mu\max(0, \bar{\eta}) + \lambda h(\bar{\x}, \bar{\eta}) &\leq  f({\x}^{\star}) + \mu \max(0, {\eta}^{\star}) \\
        &= f({\x}^{\star}) + \mu \max(0, {\eta}^{\star}) + \frac{\lambda + \lambda_\mu}{2} h(x^\star,\eta^\star) \\
        &\leq f(\bar{\x}) + \mu \max(0, \bar{\eta}) + \frac{\lambda + \lambda_\mu}{2} h(\bar{\x},\bar{\eta})\\
    \end{split}
    \end{equation*}
    But since $\lambda > \lambda_\mu$ we necessarily have: $h(\bar{\x},\bar{\eta}) = 0$ which implies by the properties of the value function that $(\bar{\x}, \bar{\eta})$ is a feasible point for $(P_{\mu})$.
\end{proof}

%----------------------------------------------------------------------
%%% SECOND PENALIZATION AND CONVERGENCE
%----------------------------------------------------------------------
\subsection{Double penalization scheme}\label{sec:convergence_analysis}

From the previous results, we get that solving the sequence of penalized problems gives approximations of the solution of the initial problem. We formalize this %usual fact
in the next proposition suited for our context of double penalization.
The proof of this result follows standard arguments; see e.g.\;\cite[Ch.\;13.1]{luenberger1984linear}.

\begin{proposition}\label{prop:convergence_of_penalty}
Assume that Problem\;\eqref{eq:general_reformulation} has a non-empty feasible set.
Let $(\mu_k)_{k \geq 0}$ be an increasing sequence such that $\mu_k \shortarrow{1} \infty$, and
$(\lambda_k)_{k \geq 0}$ be taken such that $\lambda_k > \frac{\mu_k}{\delta}$ with $\delta$ as defined in \eqref{eq:def_delta}.
If, for all $k$, there exists a solution of\;$(P_{\lambda_k, \mu_k})$ (denoted by $(x_k, \eta_k)$),
then any cluster point of the sequence $({\x}_k, \eta_k)$ is an optimal solution of~\eqref{eq:general_prob}.
\end{proposition}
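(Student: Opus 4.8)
The plan is to run the classical penalization–convergence argument (in the spirit of \cite[Ch.\;13.1]{luenberger1984linear}), the only non-routine ingredients being the nonnegativity and (lower semi)continuity of the value function $h$ together with the feasibility characterization underlying the bilevel reformulation.

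First I would fix notation: let $\bar v$ be the optimal value of \eqref{eq:general_prob} and let $F_k(\x,\eta)=f(\x)+\lambda_k\,h(\x,\eta)+\mu_k\max(\eta,0)$ be the objective of $(P_{\lambda_k,\mu_k})$. For every $\x$ feasible for \eqref{eq:general_prob}, the pair $(\x,\pquantile{g(\x,\xi)})$ lies in $\setX\times\R$, satisfies $\pquantile{g(\x,\xi)}\le 0$ by Lemma~\ref{lem:proba_to_quantiles} and $\pquantile{g(\x,\xi)}\in S(\x)$ by Lemma~\ref{lem:rock_cvar}; hence $h$ and the $\max$ term both vanish at this pair and $F_k(\x,\pquantile{g(\x,\xi)})=f(\x)$. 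Since $(\x_k,\eta_k)$ minimizes $F_k$ over $\setX\times\R$, taking the infimum over feasible $\x$ gives the basic bound
\[
 f(\x_k)+\lambda_k\,h(\x_k,\eta_k)+\mu_k\max(\eta_k,0)\ \le\ \bar v
 \qquad\text{for all }k ,
\]
and in particular $f(\x_k)\le\bar v$ (which incidentally forces $\bar v>-\infty$, since $f$ is real-valued).

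Next I would take a cluster point $(\x_{k_j},\eta_{k_j})\to(\bar\x,\bar\eta)$. Since $\setX$ is closed, $\bar\x\in\setX$; since $f$ is convex and finite-valued on $\Rd$, it is continuous, so $f(\x_{k_j})\to f(\bar\x)$. The displayed bound then shows that $\lambda_{k_j}\,h(\x_{k_j},\eta_{k_j})+\mu_{k_j}\max(\eta_{k_j},0)\le\bar v-f(\x_{k_j})$ is bounded; because $\mu_k\to\infty$ and $\lambda_k>\mu_k/\delta\to\infty$ (here $\delta>0$ is a fixed constant), dividing by $\mu_{k_j}$, resp.\ $\lambda_{k_j}$, forces $\max(\eta_{k_j},0)\to 0$ and $h(\x_{k_j},\eta_{k_j})\to 0$. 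Passing to the limit yields $\max(\bar\eta,0)=0$, i.e.\ $\bar\eta\le 0$; and, using that $h$ is continuous — it is the difference of the jointly continuous $G$ and the map $\x\mapsto\min_s G(\x,s)$, which is a finite-valued convex function on $\Rd$ (an infimal projection of the jointly convex $G$, with the infimum attained at $\pquantile{g(\x,\xi)}$), hence continuous — together with $h\ge 0$, we get $h(\bar\x,\bar\eta)=0$, i.e.\ $\bar\eta\in S(\bar\x)$. Thus $(\bar\x,\bar\eta)$ is feasible for \eqref{eq:general_reformulation}.

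Finally I would conclude optimality. Exactly as in the proof of Theorem~\ref{thm:bilevel_reformulation}, feasibility of $(\bar\x,\bar\eta)$ for \eqref{eq:general_reformulation} gives $\pquantile{g(\bar\x,\xi)}\le\bar\eta\le 0$, so $\bar\x$ is feasible for \eqref{eq:general_prob}; combining with $f(\bar\x)=\lim_j f(\x_{k_j})\le\bar v$ gives $f(\bar\x)=\bar v$, and by Theorem~\ref{thm:bilevel_reformulation} this makes $(\bar\x,\bar\eta)$ (equivalently $\bar\x$) an optimal solution of \eqref{eq:general_prob}. The main obstacle is precisely the passage to the limit in the penalty terms: it rests on the lower semicontinuity of $\x\mapsto\min_s G(\x,s)$, without which one could not transfer $h(\x_{k_j},\eta_{k_j})\to 0$ into $h(\bar\x,\bar\eta)=0$; here continuity follows from convexity and finiteness, so this point is mild, and everything else is bookkeeping with the displayed bound and the growth conditions $\mu_k\to\infty$, $\lambda_k\to\infty$.
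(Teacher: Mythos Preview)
Your proof is correct and takes a more direct route than the paper's. The paper invokes the exact penalization result (Proposition~\ref{prop:exact_penalization}) to conclude that each $(\x_k,\eta_k)$ already satisfies $h(\x_k,\eta_k)=0$, then establishes monotonicity of $k\mapsto\max(\eta_k,0)$ and $k\mapsto f(\x_k)$ before passing to the limit. You bypass all of this by working directly with the single comparison $F_k(\x_k,\eta_k)\le\bar v$ and using $\mu_k,\lambda_k\to\infty$ to drive both penalty terms to zero along the convergent subsequence. Your argument is shorter and, incidentally, shows that the specific coupling $\lambda_k>\mu_k/\delta$ is not actually needed for this proposition---any $\lambda_k\to\infty$ would do---whereas the paper's proof uses that coupling essentially via Proposition~\ref{prop:exact_penalization}, and in return obtains the stronger structural information that $h(\x_k,\eta_k)=0$ holds exactly at every iterate, not just in the limit. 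Both proofs ultimately rely on the same continuity of $h$ to transfer vanishing of the value function to the cluster point.
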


\begin{proof}

The fact that $(x_k,\eta_k)$ is an optimal solution of\;$(P_{\lambda_k,\mu_k})$ implies that
\begin{align}\label{seq:1}
f({\x}_k) + \mu_k \max(0, \eta_k) &+ \lambda_k h(x_k,\eta_k)\\
&\leq f(x_{k+1}) + \mu_k \max(0, \eta_{k+1}) + \lambda_k
h(x_{k+1},\eta_{k+1}) \notag
\end{align}
Similarly for $(x_{k+1},\eta_{k+1})$, we get
\begin{align*}
f(x_{k+1})  + \mu_{k+1} \max(0, \eta_{k+1}) &+ \lambda_{k+1} h(x_{k+1},\eta_{k+1}) \notag \\
& \leq f(x_k) + \mu_{k+1} \max(0, \eta_k) + \lambda_{k+1}
h(x_k,\eta_k).
\end{align*}
By Proposition\;\ref{prop:exact_penalization}, $\eta_k$ (resp. $\eta_{k+1}$) is feasible for $(P_{\mu_k})$ (resp. $(P_{\mu_{k+1}})$); in other words, we have $h(\x_k,\eta_k) = h(\x_{k+1},\eta_{k+1}) = 0$. Hence summing up these two inequalities yields
\begin{equation*}
        \max(\eta_k, 0) \geq \max(\eta_{k+1}, 0).
\end{equation*}
Using this last inequality with \eqref{seq:1} gives:
    \begin{equation*}
        \begin{split}
            f({\x}_k) - f({\x}_{k+1}) \leq \mu_k \left (\max(\eta_{k+1}, 0) - \max(\eta_{k}, 0)\right) \leq
            0,
        \end{split}
    \end{equation*}
and as a consequence the sequence $\{f({\x}_k)\}_{k \geq 0}$
increases. Let $(\x', \eta')$ be an arbitrary feasible solution
for $(P)$. By definition of the sequence $({\x}_k, \eta_k)$, for
any $k \in \N$, we have:
    \begin{equation}\label{eq:increase_function_values}
        f({\x}_k) \leq f({\x}_k) + \mu_k \max(\eta_k, 0) \leq f({\x}') + \mu_k \max(\eta', 0) \leq
        f({\x}').
    \end{equation}
Therefore for any cluster point $(\bar{\x}, \bar{\eta})$ of the sequence $\{({\x}_k, \eta_k)\}_{k \geq 0}$, we have $f(\bar{\x}) \leq f(\x')$. In order to show that $(\bar{\x}, \bar{\eta})$ is a solution of~\eqref{eq:general_reformulation}, it remains to show its feasibility.
With the right hand side inequality of~\eqref{eq:increase_function_values}, we obtain
    \begin{equation*}
        \max(\eta_k, 0) \leq \frac{f(\x') - f({\x}_k)}{\mu_k} \leq \frac{f(\x') - f(\x_0)}{\mu_k} \xrightarrow[k \rightarrow \infty]{}
        0,
    \end{equation*}
so that we may deduce that, $\bar{\eta} \leq 0$. Moreover, continuity of $h$ ensures that $h(\bar x, \bar \eta)=0$ which completes the
proof.
\end{proof}

In words, cluster points of a
sequence of solutions obtained as $\mu$ grows to\;$+\infty$ are
feasible solutions of the initial chance-constrained problem. In practice though, we have observed that taking a fixed
$\mu$ is enough for reaching good approximations of the solution with increasing $\lambda$'s; see in particular the numerical experiments of Section~\ref{sec:experiments}. In the next section, we discuss further the practical implementation of the conceptual double penalization scheme.

\section{Double penalization in practice}
\label{sec:bundle}

In this section, we propose a practical version of the double penalization scheme for solving chance-constrained optimization problems.
First, we present in Section\;\ref{sec:bundle_algorithm} how to tackle the inner penalized problem $(P_{\lambda, \mu})$ by leveraging its difference-of-convex (DC) structure.
Then we quickly describe, in Section\;\ref{sec:taco}, the python toolbox that we release, implementing this bundle algorithm and efficient oracles within the double penalization method.

\subsection{Solving penalized problems by a bundle algorithm}
\label{sec:bundle_algorithm}

We discuss here an algorithm for solving $(P_{\lambda, \mu})$ by revealing the DC structure of the objective function. Notice indeed that, introducing the two convex functions
\[
\varphi_1(\x, \eta) = f(\x) + \lambda G(\x, \eta) + \mu
\max(\eta, 0) \quad\text{and}\quad
\varphi_2(\x, \eta) = \lambda \min_{s\in \R}
G(\x, s)\]
we can write $(P_{\lambda, \mu})$ as the DC problem
\begin{equation}\label{eq:dc}
\min_{(\x,\eta) \in \setX\times\R} \varphi(\x, \eta) = \varphi_1(\x, \eta) - \varphi_2(\x, \eta).
\end{equation}
We then propose to solve this problem by the bundle
algorithm of\;\cite{de2019proximal}, which showed to be a method of choice for DC problems.
This bundle algorithm interacts with first-order oracles for $\varphi_1$ and $\varphi_2$; in our situation, there exist computational procedures to compute subgradients of $\varphi_1$ and $\varphi_2$ from output of oracles of $f$ and $g$, as formalized in the next proposition. The proof of this proposition is deferred to Appendix~\ref{app:taco_details}. Note that at the price of more heavy expressions, we could derive the whole subdifferential.

\begin{proposition}\label{prop:oracle_computation}
Let $(x, \eta) \in \setX \times \R$ be fixed. Let $s_f$ be a subgradient of f at $x$ and ${s_g}_1, \dots, {s_g}_n$ be respective subgradients of $g(\cdot, \xi_1),\dots, g(\cdot, \xi_n)$ at $x$. For a given $t\in \R$, denote by $I_{>t}$ the set of indices such that $g(x, \xi_i) > t$ and by $I_{=t}$ the set of indices such that $g(x, \xi_i) = t$. Let finally $\alpha = \frac{\probability[g(x,\xi) \leq Q_p(g(x, \xi)] - p}{\#(I_{=Q_p(g(x, \xi))})}$. Then, $s_{\varphi_1}$ and $s_{\varphi_2}$ defined as:
\begin{equation*}
\begin{split}
    s_{\varphi_1} = &\left(s_f + \frac{\lambda}{n(1-p)} \sum_{i \in I_{>\eta}}^n {s_g}_i ~,~ 1 + \mu \mathds{1}_{\eta > 0}  - \lambda \frac{\#(I_{>\eta})}{n(1-p)}  \right)\\
\end{split}
\end{equation*}
\begin{equation*}
    s_{\varphi_2} = \left(\frac{\lambda}{n(1-p)} \left(\sum_{i \in I_{>Q_{p}(g(x,\xi))}} \!\!{s_g}_i + \alpha \!\!\sum_{i \in I_{=Q_{p}(g(x,\xi))}} \!\! {s_g}_i \right)~,~ 0 \right)  \\
\end{equation*}
are respectively subgradients of $\varphi_1$ and $\varphi_2$ at $(x, \eta)$.
\end{proposition}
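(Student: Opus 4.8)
The plan is to produce, for each of the two convex functions $\varphi_1$ and $\varphi_2$, one explicit element of its subdifferential at $(x,\eta)$, by decomposing the function into elementary convex pieces and invoking the standard calculus of convex subdifferentials (sum rule, chain rule through a nondecreasing convex outer function, marginal-function rule) while making a definite selection at each step; since the bundle algorithm only queries first-order oracles, producing one subgradient of each is enough, which is why the full subdifferentials are not needed.

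\emph{Subgradient of $\varphi_1$.} Because $\xi$ takes the values $\xi_1,\dots,\xi_n$ each with probability $1/n$, the lower objective is $G(x,\eta)=\eta+\frac{1}{n(1-p)}\sum_{i=1}^n\max(g(x,\xi_i)-\eta,0)$, hence
\[
\varphi_1(x,\eta)=f(x)+\lambda\,\eta+\frac{\lambda}{n(1-p)}\sum_{i=1}^n\max(g(x,\xi_i)-\eta,0)+\mu\max(\eta,0)
\]
is a finite sum of convex functions. I would treat the summands one at a time: $f$ contributes $(s_f,0)$; the linear term $\lambda\,\eta$ contributes $(0,\lambda)$; $\mu\max(\eta,0)$ contributes $(0,\mu\mathds{1}_{\eta>0})$, where when $\eta=0$ we take the endpoint $0$ of the subdifferential $[0,\mu]$. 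For the $i$-th term, $(x,\eta)\mapsto\max(g(x,\xi_i)-\eta,0)$ is the composition of the nondecreasing convex function $t\mapsto\max(t,0)$ with the convex map $(x,\eta)\mapsto g(x,\xi_i)-\eta$, whose subdifferential at $(x,\eta)$ is $\partial_x g(x,\xi_i)\times\{-1\}$; by the chain rule, picking the outer derivative $1$ when $g(x,\xi_i)>\eta$ and $0$ otherwise (legitimate, the subdifferential of $t\mapsto\max(t,0)$ at $0$ being $[0,1]\ni 0$) and using the given subgradient ${s_g}_i$, this term contributes $({s_g}_i,-1)$ for $i\in I_{>\eta}$ and $(0,0)$ for $i\notin I_{>\eta}$. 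Adding the chosen subgradients gives exactly $s_{\varphi_1}$.

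\emph{Subgradient of $\varphi_2$.} The function $\varphi_2(x,\eta)=\lambda\min_{s\in\R}G(x,s)$ does not depend on $\eta$, so the $\eta$-component of any subgradient is $0$; for the $x$-part I would use the marginal-function rule. As $G$ is jointly convex in $(x,s)$ and, by Lemma~\ref{lem:rock_cvar}, the infimum over $s$ is attained at the quantile $q_p:=Q_p(g(x,\xi))\in S(x)$, any $v$ with $(v,0)\in\partial G(x,q_p)$ is a subgradient of $x\mapsto\min_s G(x,s)$: the subgradient inequality $G(x',s')\ge G(x,q_p)+\langle v,x'-x\rangle$, valid for all $(x',s')$, yields the desired inequality for the marginal function after taking the infimum over $s'$ and using $G(x,q_p)=\min_s G(x,s)$. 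It thus remains to exhibit a subgradient of $G$ at $(x,q_p)$ with vanishing $s$-component. Starting again from $G(x,s)=s+\frac{1}{n(1-p)}\sum_i\max(g(x,\xi_i)-s,0)$ and assigning weight $1$ to the max-terms with $g(x,\xi_i)>q_p$, weight $0$ to those with $g(x,\xi_i)<q_p$, and a common weight $\alpha\in[0,1]$ (the subdifferential of $t\mapsto\max(t,0)$ at $0$) to the ``tie'' terms $i\in I_{=q_p}$, the resulting subgradient has $s$-component $1-\frac{1}{n(1-p)}\big(\#(I_{>q_p})+\alpha\,\#(I_{=q_p})\big)$ and $x$-component $\frac{1}{n(1-p)}\big(\sum_{i\in I_{>q_p}}{s_g}_i+\alpha\sum_{i\in I_{=q_p}}{s_g}_i\big)$. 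Requiring the $s$-component to vanish determines $\alpha$, and a short computation using $\#(I_{>q_p})=n\,\probability[g(x,\xi)>q_p]$ rewrites it through $\probability[g(x,\xi)\le q_p]-p$ and $\#(I_{=q_p})$; multiplying the $x$-component by $\lambda$ and appending the zero $\eta$-component gives $s_{\varphi_2}$.

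The main obstacle is the treatment of the ``tie'' set $I_{=q_p}$: one has to verify that a single coefficient $\alpha$ can simultaneously cancel the $s$-component of the subgradient of $G$ at the quantile and stay within $[0,1]$. This is precisely where the two quantile facts used elsewhere in the paper enter — $\probability[g(x,\xi)\le q_p]\ge p$ (the argument in the proof of Lemma~\ref{lem:proba_to_quantiles}), giving $\alpha\ge 0$, and $\probability[g(x,\xi)<q_p]\le p$ (immediate from the definition of the quantile), giving $\alpha\le 1$; moreover $\#(I_{=q_p})\ge 1$ because $q_p$ is one of the values $g(x,\xi_i)$, so the division defining $\alpha$ is legitimate. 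Beyond this point, everything is routine convex-subdifferential calculus, valid because $f$, the maps $g(\cdot,\xi_i)$, and hence $\varphi_1$ and $\varphi_2$, are finite-valued convex functions on the whole space.
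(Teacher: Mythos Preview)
Your proof is correct and follows essentially the same route as the paper's: for $\varphi_1$, decompose it as a finite sum of convex pieces and apply the sum and chain rules term by term; for $\varphi_2$, invoke the marginal-function rule at the minimizer $q_p=Q_p(g(x,\xi))$ and pick the tie-weight $\alpha$ so that the $s$-component of the chosen subgradient of $G$ at $(x,q_p)$ vanishes. Your explicit check that $\alpha\in[0,1]$ and that $\#(I_{=q_p})\ge 1$ is a welcome addition the paper glosses over; note also that your (correct) computation of the $\eta$-component yields $\lambda+\mu\,\mathds{1}_{\eta>0}-\lambda\,\#(I_{>\eta})/(n(1-p))$, which matches the paper's own proof but not the ``$1+\cdots$'' in the displayed statement --- that is a typo in the paper, not an error on your part.
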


Notice now that the convergence result for the bundle algorithm \cite[Th.\;1]{de2019proximal} guarantees convergence towards a point $\bar u = (\bar{x},\bar \eta$) satisfying
\begin{equation}\label{eq:def_critical}
    \partial \varphi_2(\bar u) \cap \partial \varphi_1(\bar u) \neq
    \emptyset,
\end{equation}
which is a weak notion of criticality.
Thus, we propose to furthermore replace  $\varphi_2$ in \eqref{eq:dc} by a smooth approximation of it, denoted by $\widetilde{\varphi}_2$. The reason is that the bundle method minimizing $\widetilde{\varphi} = \varphi_1 - \widetilde{\varphi}_2$ then reaches a Clarke-stationary point:
indeed,
\eqref{eq:def_critical} reads $\nabla \widetilde{\varphi}_2(\bar u) \subset \partial \varphi_1(\bar u)$, which gives $ 0 \in \partial\varphi(\bar u) =  \partial \varphi_1(\bar u) + \nabla \widetilde{\varphi}_2(\bar u)$, i.e.,\;that $\bar u$ is Clarke-stationary (for the smoothed problem).
To smooth $\varphi_2$, we use the efficient smoothing procedure of~\cite{laguel-etal:spqr:mlsp2020} for superquantile-based functions (implementing the Nesterov's smoothing technique~\cite{nesterov2005smooth}).
More precisely, \cite[Prop.\;2.2]{laguel-etal:spqr:mlsp2020}
reads as follows.
\begin{proposition}
Assume that $g$ is differentiable. For a smoothing parameter $\rho > 0$, the function %$\widetilde{\varphi}_2$, defined as:
\begin{equation}\label{eq:smoothin_pb}
    \widetilde{\varphi}_2(x, \eta) = \lambda \sup_{\substack{0\leq q_i \leq \frac{1}{n(1-p)} \\ q_1 + \dots + q_n = 1}} \sum_{i=1}^n \left\{q_i\; g(x, \xi_i) - \frac{\rho}{2} (q_i - \tfrac{1}{n})^2\right\}
\end{equation}
is a global approximation of $\varphi_2$, such that
$\widetilde{\varphi}_2(x, \eta) \leq \varphi_2(x, \eta) \leq \widetilde{\varphi}_2(x, \eta) + \frac{\lambda \rho}{2}$ for all $(x, \eta) \in {\R}^{d+1}$. Moreover, the function is differentiable and its gradient writes, with $S = ({s_g}_i)_{1 \leq q \leq n}$ the Jacobian of $x \mapsto (g(x, \xi_1),\dots, g(x, \xi_n))$, as
\[
    \nabla \widetilde{\varphi}_2 (x, \eta) = (\lambda\; S\; \Tilde{q}\;,\;0)
\]
where $\Tilde{q}$ is the (unique) optimal solution of~\eqref{eq:smoothin_pb}.
\end{proposition}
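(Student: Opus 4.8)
\emph{Step 1: dual form of $\varphi_2$.} The plan is to recognize $\varphi_2$ as a rescaled superquantile and rewrite it in Fenchel-dual ``risk envelope'' form, after which the statement follows from the standard Nesterov smoothing argument~\cite{nesterov2005smooth}. Writing $z = z(x) = (g(x,\xi_1),\dots,g(x,\xi_n))\in\R^n$, Lemma~\ref{lem:rock_cvar} together with \eqref{eq:G} gives $\tfrac1\lambda\varphi_2(x,\eta) = \min_{s\in\R}\bigl(s + \tfrac{1}{n(1-p)}\sum_{i=1}^n\max(z_i-s,0)\bigr)$, a one-dimensional piecewise-linear convex program, equivalently the linear program $\min\{\,s + \tfrac1{n(1-p)}\sum_i u_i : u_i\ge z_i-s,\ u_i\ge 0\,\}$. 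Dualizing and invoking strong LP duality (the primal is feasible and bounded below, being equal to $\bar Q_p(g(x,\xi))$) yields
\[
\tfrac1\lambda\varphi_2(x,\eta)\;=\;\max_{q\in\mathcal Q}\ \langle q,z\rangle,\qquad \mathcal Q:=\Bigl\{q\in\R^n:\ 0\le q_i\le\tfrac1{n(1-p)},\ \textstyle\sum_{i}q_i=1\Bigr\},
\]
where $\mathcal Q$ is nonempty (it contains $\tfrac1n\ones$), convex and compact. In particular $\varphi_2$ does not depend on $\eta$, and $\tfrac1\lambda\widetilde\varphi_2(x,\eta)=\Phi(z(x))$, with $\Phi(z):=\max_{q\in\mathcal Q}\{\langle q,z\rangle-d(q)\}$ and $d(q):=\tfrac\rho2\sum_i(q_i-\tfrac1n)^2\ge0$.

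\emph{Step 2: the two-sided estimate.} Since $d\ge0$ on $\mathcal Q$, for every $q$ we have $\langle q,z\rangle-d(q)\le\langle q,z\rangle$, hence $\widetilde\varphi_2\le\varphi_2$. Conversely, let $q^\star$ attain $\max_{q\in\mathcal Q}\langle q,z\rangle$; then $\Phi(z)\ge\langle q^\star,z\rangle-d(q^\star)=\tfrac1\lambda\varphi_2-d(q^\star)$. It remains to bound $d(q^\star)$: the convex function $q\mapsto\sum_i(q_i-\tfrac1n)^2$ attains its maximum over the probability simplex (which contains $\mathcal Q$) at a vertex $e_j$, where its value equals $(1-\tfrac1n)^2+(n-1)\tfrac1{n^2}=1-\tfrac1n\le1$; hence $d(q^\star)\le\tfrac\rho2$ and $\varphi_2\le\widetilde\varphi_2+\tfrac{\lambda\rho}2$.

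\emph{Step 3: differentiability and gradient.} Because $d$ is $\rho$-strongly convex, $q\mapsto\langle q,z\rangle-d(q)$ is strictly concave and continuous on the compact convex set $\mathcal Q$, so it has a unique maximizer $\tilde q=\tilde q(z)$; a standard estimate from strong convexity shows $z\mapsto\tilde q(z)$ is $\tfrac1\rho$-Lipschitz, hence continuous. Danskin's theorem (equivalently \cite[Thm.~1]{nesterov2005smooth}) then shows $\Phi$ is differentiable with $\nabla\Phi(z)=\tilde q(z)$: its directional derivative in any direction is $\langle\tilde q(z),\cdot\rangle$, so $\Phi$ is G\^{a}teaux differentiable, and continuity of $z\mapsto\tilde q(z)$ upgrades this to continuous (Fr\'{e}chet) differentiability. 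Finally, since $g$ is differentiable, $x\mapsto z(x)$ is differentiable with Jacobian $S=(\nabla_x g(x,\xi_i))_{1\le i\le n}$; applying the chain rule to $\widetilde\varphi_2(x,\eta)=\lambda\,\Phi(z(x))$ and using independence of $\eta$ gives $\nabla\widetilde\varphi_2(x,\eta)=(\lambda\,S\,\tilde q,\,0)$. The main obstacle is this last step --- establishing differentiability of the value function $\Phi$ together with the exact formula $\nabla\Phi=\tilde q$ --- which is precisely where strong convexity of $d$ (uniqueness of $\tilde q$) is essential; a secondary point requiring care is the minimax/LP-duality exchange of Step~1.
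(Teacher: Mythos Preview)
Your proof is correct and follows the standard Nesterov-smoothing argument: rewrite the superquantile in its dual (risk-envelope) form, add a strongly convex prox term, and invoke Danskin's theorem with uniqueness of the maximizer to obtain differentiability and the gradient formula. The paper does not give its own proof of this proposition; it simply records it as a restatement of \cite[Prop.~2.2]{laguel-etal:spqr:mlsp2020}. Your argument is precisely the one underlying that reference, so there is nothing to contrast.

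One minor remark on Step~2: you bound $d(q^\star)$ by maximizing $\sum_i(q_i-\tfrac1n)^2$ over the full simplex rather than over $\mathcal Q$, which is slightly wasteful (the max over $\mathcal Q$ is strictly smaller when $p>0$) but perfectly adequate for the claimed bound $\tfrac{\lambda\rho}{2}$.
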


Note that the computation of $\widetilde{q}$ can be performed with fast computational procedures, as proposed in~\cite{laguel-etal:spqr:mlsp2020}.

\subsection{A python toolbox for chance constrained optimization}\label{sec:taco}

%----------------------------------------------------------------------
%%% INTRODUCTION
%----------------------------------------------------------------------

We release \texttt{TACO}, an open-source python toolbox for solving chance constrained optimization problems~\eqref{eq:general_prob}. The toolbox implements the penalization approach outlined in section~\ref{sec:penalty} together with the bundle method~\cite{de2019proximal} for the inner penalized subproblems. \texttt{TACO} routines rely on just-in-time compilation supported by Numba~\cite{10.1145/2833157.2833162}. The routines are optimized to provide fast performances on reasonably large datasets. Documentation is available at:
\begin{center}
\url{https://yassine-laguel.github.io/taco}
\end{center}
We provide here basic information on \texttt{TACO}; for further information, we refer to section~\ref{app:taco_details} in appendix and the online documentation.
%----------------------------------------------------------------------
%%% Functionning
%----------------------------------------------------------------------
% \vspace{-1ex}
% \Paragraph{Overall Functionning}

%----------------------------------------------------------------------
%%% Inputs
%----------------------------------------------------------------------
The python class \texttt{Problem} wraps up all information about the problem to be solved.
This class possesses an attribute \texttt{data} which contains the values of $\xi$ and is formatted as a \texttt{numpy} array in 64-bit float precision. The class also implements two methods giving first-order oracles: \texttt{objective\_func} and \texttt{objective\_grad} for the objective function $f$, and \texttt{constraint\_func} and \texttt{constraint\_grad} for the constraint function $g$.

Let us take a simple quadratic problem in $\R^2$ to illustrate the instantiation of a problem. We consider
\begin{align*}
    &\!\!\min_{x\in \R^2}\;\; \|x - a\|^2   & a = [1.0, 2.0]^\top\\
    &\text{s.t.} \;\; \probability[{\x}^\top \xi \leq 0] \geq 0.9, & \text{ with $1000$ samples of $\xi\sim \mathcal{N}(0,1)$} .
\end{align*}
The instance of \texttt{Problem} is in this case:
\begin{lstlisting}
    import numpy as np
    class Problem:
        def __init__(self, dim=2, sample_size=1000):
            self.data = np.random.normal(size=(sample_size, dim), dtype=np.float64)
            self.a = np.array([1.0, 2.0], dtypte=np.float64)
        def objective_fun(self,x):
            return np.dot(x-self.a,x-self.a)
        def objective_grad(self, x):
            return x
        def contraint_func(self, x, z):
            return np.dot(x,z)
        def constraint_grad(self, x, z)
            return z
    problem = Problem()
\end{lstlisting}

%----------------------------------------------------------------------
%%% Running
%----------------------------------------------------------------------

 \texttt{TACO} handles the optimization process with a python class named \texttt{Optimizer}. Given an instance of Problem and hyper-parameters provided by the user, the class Optimizer runs an implementation of the bundle method of~\cite{de2019proximal} on the penalized problem~\eqref{eq:def_penalized_pb}. The toolbox gives the option to update the penalization parameters\;$\mu, \lambda$ along the running process to escape possible stationary points for the DC objective that are non-feasible for the chance constraint.
\begin{lstlisting}
    from taco import Optimizer
    problem = Problem()
    optimizer = Optimizer(problem, p=0.9, starting_point=np.zeros(2, dtype=np.float64), pen1=1.0, pen2=10.0)
    sol = optimizer.run()
\end{lstlisting}

\vspace{-1ex}

Customizable parameters are stored in a python dictionary, called \texttt{params}, and designed as an attribute of the class \texttt{Optimizer}. The main parameters to tune are: the safety level of probability\;\texttt{p}, the starting penalization parameters $\mu=\texttt{pen1}$ and $\lambda=\texttt{pen2}$, the starting point of the algorithm and the starting value for the proximal parameter of the bundle method.
Others parameters are filled with default values when instantiating an \texttt{Optimizer}; for instance:
%we can write:
\begin{lstlisting}
    custom_options = {
        'p': 0.9,
        'pen1': 1.0,
        'pen2': 10.0,
        'bund_mu_start': 50.0,
        'bund_max_size_bundle_set': 30,
    }
    custom_optimizer = Optimizer(problem, params=custom_options)
\end{lstlisting}

Some important parameters (such as the safety probability level, or the starting penalization parameters) may also be given directly to the constructor of the class \texttt{Optimizer}, when instantiating the object; as in the first example.

\section{Numerical illustrations}
\label{sec:experiments}
% ----------------------------------------------------------------------
% %% EXPERIMENTS
% ----------------------------------------------------------------------
% !TEX root = ./Main.tex

We illustrate our double penalisation approach implemented in the toolbox \texttt{TACO} on two problems:
a 2-dimensional quadratic problem with a non-convex chance constraint (in Section\;\ref{sec:plot}), and a family of problems with explicit solutions (in Section\;\ref{sec:family}). These proof-of-concept experiments are not meant to be extensive but to show that our approach is viable. These experiments are reproducible: the experimental framework is available on the toolbox's website.

\vspace*{-1ex}
\subsection{Visualization of convergence on a 2-d problem}\label{sec:plot}

\vspace*{-1ex}
We consider a two-dimensional toy quadratic problem in order to track the convergence of the iterates on %the plot of
the sublevel sets.
We take \cite[Ex.\;4.1]{vanAckooij2020CC} which considers an instance of problem~\eqref{eq:general_prob} with
\begin{equation}\label{eq:first_toy_prob}
\begin{split}
    f(x) &= \frac{1}{2} (x-a)^\top Q (x-a) \quad \text{\;with $a = \begin{pmatrix} 2. \\ 2. \end{pmatrix}$, $Q=\begin{pmatrix} 5.5 & 4.5 \\ 4.5 & 5.5 \end{pmatrix}$}\\
    g(x,z) &= z^\top W(x) z +  w^\top z \quad \text{with $W\!(x)\!=\!\begin{pmatrix} x_1^2 + 0.5\! & 0. \\ 0. & \!|x_2 - 1|^3 \!+\! 1 \end{pmatrix}$}\\[-1ex]
    \xi &\sim \mathcal{N}(\mu, \Sigma) \qquad\text{$10^4$ samplings with $\mu = \begin{pmatrix} 1. \\ 1. \end{pmatrix}$, $\Sigma = \begin{pmatrix} 20. & 0. \\ 0. & 20. \end{pmatrix}$}.
\end{split}
\end{equation}
% \newpage

For this example, \cite{vanAckooij2020CC} shows that the chance constraint is convex for large enough probability levels, but here we take a low probability level $p=0.008$ to have a non-convex chance-constraint. We can see this on Figure~\ref{imgs:path_pb1}, ploting the level sets of the objective function and the constraint function: the chance-constrained region for $p=0.008$ is delimited by a black dashed line; the optimal value of this problem is located at the star.

% \vspace*{-3.5ex}
\begin{figure}[ht!]
    \centering
    \vspace*{-2ex}
    \includegraphics[trim=30 30 30 30, clip, width=0.45\linewidth]{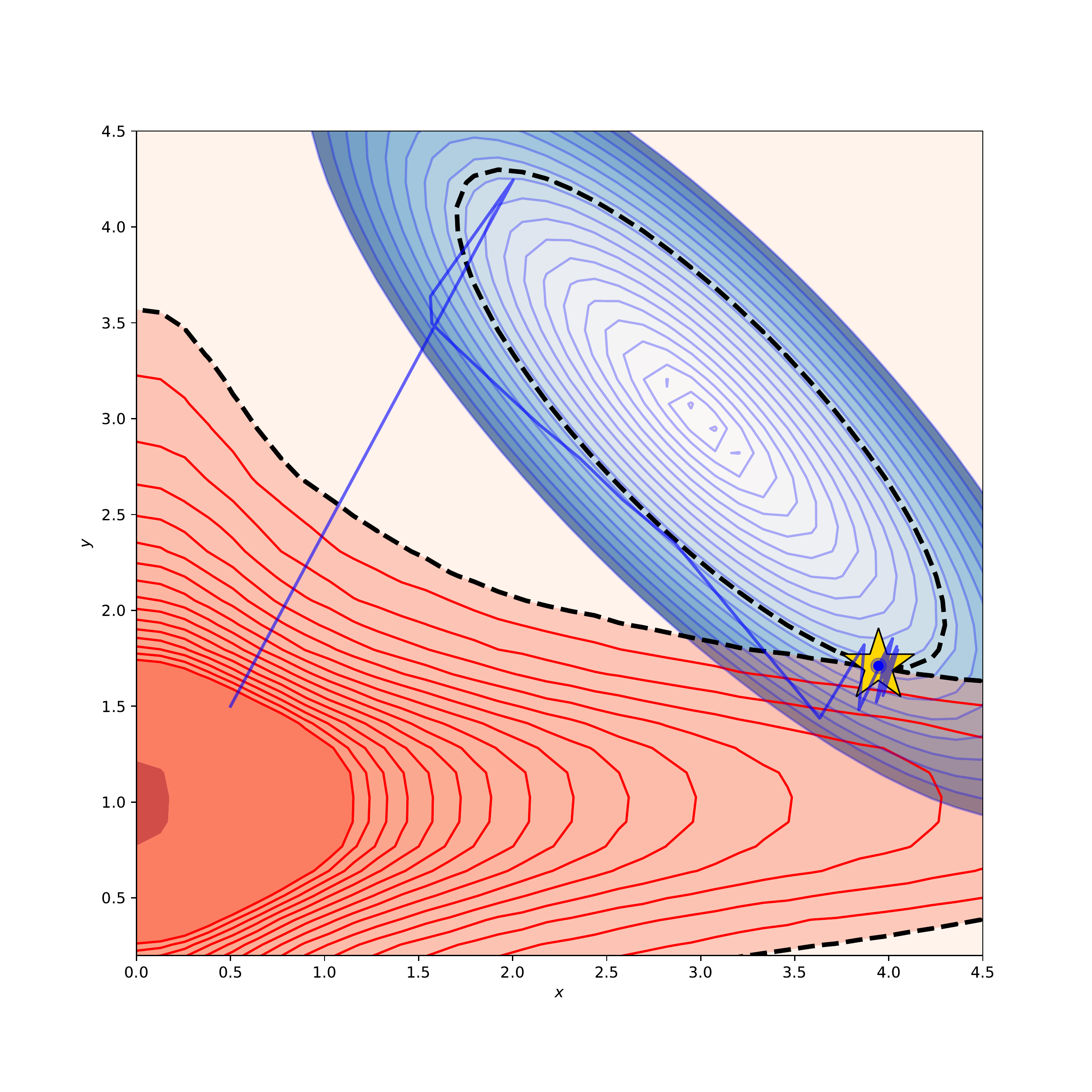}
    \vspace*{-3ex}
    \caption{Trajectory of the iterates (in blue) on the plot of the level sets of the chance-constraint and the objective for the 2d problem with data~\eqref{eq:first_toy_prob}\label{imgs:path_pb1}.}
\end{figure}
% \vspace*{-1.5ex}

We apply our double penalization method to solve this problem, with the setting described in Appendix\;\ref{app:setting} and available on the \texttt{TACO} website.
We plot on the sublevel sets of Figure~\ref{imgs:path_pb1} the path (in deep blue) taken by the sequence of iterates starting from the point $[0.5, 1.5]$ moving towards the solution.
We observe that the sequence of iterates, after a exploration of the functions landscape, gets rapidly close to the optimal solution. At the end of the convergence, we also see a zigzag behaviour around the frontier of the chance constraint.
This can be explained by the penalization term
which is activated asymptotically whenever the sequence gets out of the chance constraint.

%----------------------------------------------------------------------
%%% Toy Problem
%----------------------------------------------------------------------
\vspace*{-1ex}
\subsection{Experiments on a family of problems}\label{sec:family}

We consider the family of $d$-dimensional norm problems of~\cite[section 5.1]{hong2011sequential}. For a given dimension $d$, the problem writes as an instance of\;\eqref{eq:general_prob} with
\begin{equation}\label{eq:def_pb2}
        f(x) = - \|x\|_1 \qquad\text{and}\qquad g(x, Z) = \!\!\!\! \max_{i \in \{ 1,\dots, 10 \}} \sum_{j=1}^d Z_{i,j}^2 x_j^2 - 100
\end{equation}
and $\xi$ is random matrix of dimensions $10\times d$ statisfying for all $i,j$, $\xi_{i,j} \sim \mathcal{N}(0,1)$. The interest of this family of problems is that they have explicit solutions: for given $d$, the optimal value is
\begin{equation*}
    f^\star = -\frac{10\,d}{\sqrt{F_{\chi_d^2}^{(-1)}(p^{\frac{1}{10}})}}
\end{equation*}
where $F_{\chi_d^2}$ is $\chi^2$ cumulative distribution with $d$ degrees of freedom. We consider four instances of this problems with dimension $d$ from $2$ to $200$ and the safety probability threshold $p$ set to $0.8$. We consider the case of the rich information on uncertainty: $\xi$ is sampled $10000$ times. In this case, a direct approach consisting in solving the standard
mixed-integer quadratic reformulations (see e.g.\;\cite{ahmed2008solving}) with efficient MINLP solvers (we used %MINLP solver
\texttt{Juniper}\;\cite{juniper}) does not provide reasonable solutions; see basic information in Appendix\;\ref{app:setting}.

% \vspace*{-4ex}
\begin{figure*}[!ht]
		\centering
        {\adjincludegraphics[width=0.6\linewidth, trim=10 12 10 10, clip=true]{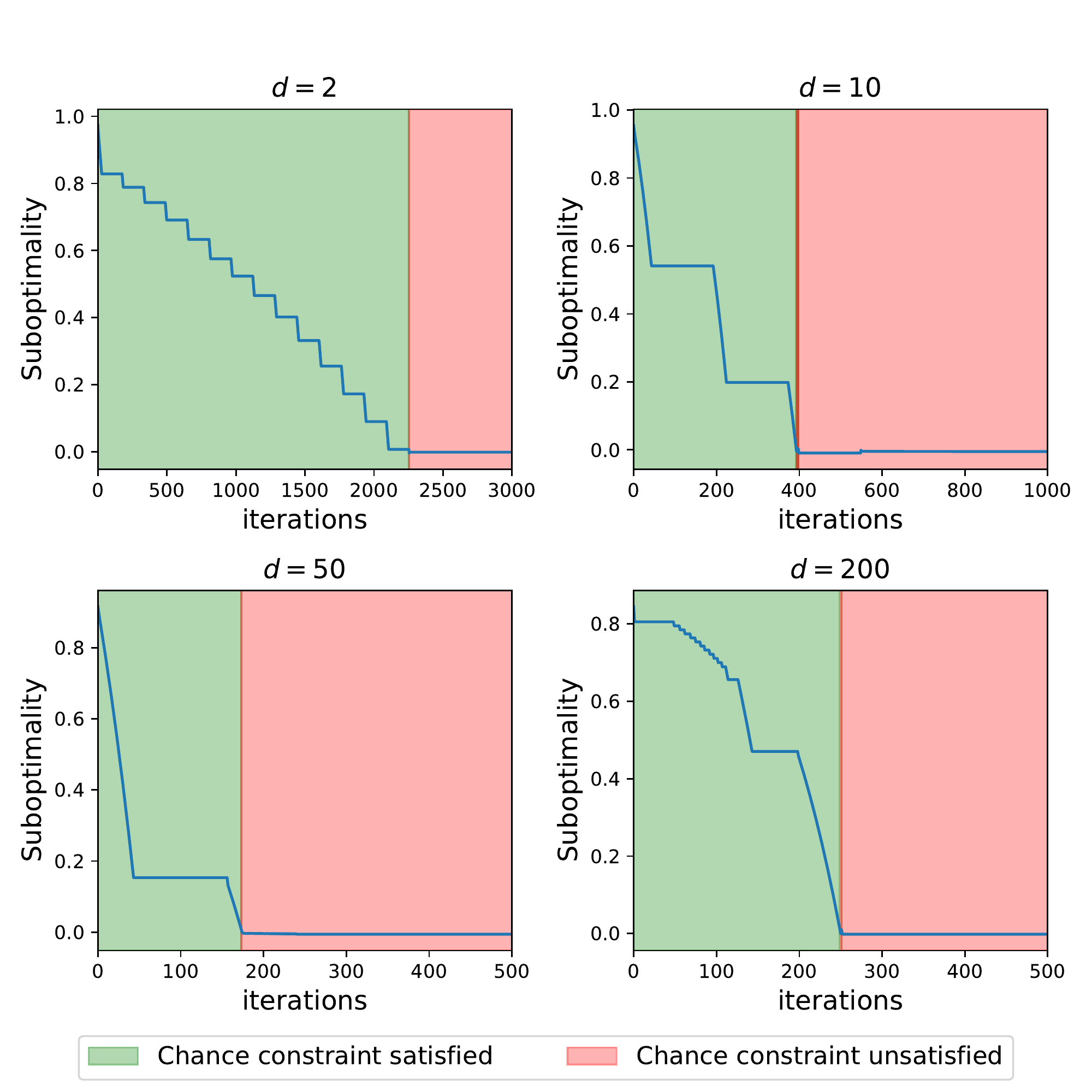}}
     \caption{\small{Convergence of the algorithm on four norm problems\;\eqref{eq:def_pb2}} with $d=2,10.50,200$.}
     \label{fig:subopt_pb2}
\end{figure*}

We solve these instances with our double penalization approach, parameterized as described in Appendix\;\ref{app:setting} (see the \texttt{TACO} website for the material and settings provided in order to reproduce experiments). Figure~\ref{fig:subopt_pb2} plots the relative suboptimality
\[
(f(x_k) - f^\star)/|f^\star|
\]
along iterations. The green (resp. red) regions represent iterates that, respectively, satisfy (resp. do not satisfy) the chance constraint.

In the four instances, we take a first iterate well inside the feasible region. We observe an initial decrease of the objective function down to optimal value. Then the chance constraint starts to be violated only when this threshold is reached, and the last part of convergence deals with local improvement of precision and feasibility.

Table~\ref{table:pb2} reports the final suboptimality and satisfaction of the probabilistic constraint. The probability constraint is evaluated for $100$ sampled points from of the total $N=10000$ points. We give the resulting probability; the standard deviation is 0.004 for the four instances.

% \vspace*{-1ex}
\begin{table*}[!ht]
% \vskip 0.15in
\begin{center}
\begin{adjustbox}{max width=0.99\linewidth}
    \begin{tabular}{lccccc}
\toprule
Dimension & Suboptimality & $\probability[g(x,\xi) \leq 0]$\\% &$\mu$ & $\lambda$ \\

\midrule

$d=2$ & $8.9 \times 10^{-4}$ & $0.799$\\% & $10.0$ & $1.75$ \\

$d=10$ & $5.0 \times 10^{-3}$ & $0.787$\\% & $10.0$ & $1.5$ \\

$d=50$ & $5.6 \times 10^{-3}$ & $0.769$\\% & $10.0$ & $1.0$ \\

$d=200$ & $1.8 \times 10^{-3}$ & $0.781$\\% & $10.0$ & $2.0$ \\

\bottomrule
\end{tabular}
\end{adjustbox}
\vspace*{-1ex}
\caption{Final suboptimality and feasibility for~\eqref{eq:def_pb2} (where $p=0.8$).\label{table:pb2}}
\end{center}
\end{table*}
% \vspace*{-1ex}

We observe that the algorithm reaches an accuracy of order of $10^{-3}$. Regarding satisfaction of the constraint $\probability[g(x,\xi) \leq 0] \geq 0.8 $, it is achieved to a $10^{-4}$ precision for $d= 2$ but it slightly degrades as the dimension grows.

\appendix

\section{Proofs of complementary results}

\subsection{Uniform bound at the limit}\label{app:continuous_case}
% We still denote $q_p$ as the $p$-quantile of the distribution followed by $g(x,\xi)$.%: $q_p = \min\{t \in \R, \probability[g(x, \xi) \leq t] \geq p\}$.  We have then the following property:
We show here that the uniform error bound derived in Section~\ref{sec:analysis_vf} vanishes at the limiting case of continuous distributions. We assume that, for a fixed $x\in \Rd$, the random variable $g(x,\xi)$ has a continuous density $f_{\x, \xi}:\R \rightarrow \R$ denoted by $f_{\x, \xi}$: we have, for all $a\leq b$,
\begin{equation*}
    \probability[ a \leq g(\x,\xi)  \leq b] = \int_{a}^b f_{\x, \xi}(t) \measuredWRT t.
\end{equation*}
\vspace*{-1ex}
\begin{proposition}
    Fix $x\in \R^d$ and denote by $q_p$ the $p$-quantile of the distribution followed by the random variable $g(x,\xi)$. If  $g(x,\xi)$ has a continuous density, then the value function $\eta \mapsto h(x, \eta)$ defined in \eqref{eq:herrfct} is differentiable at $\eta = q_p$ (with $h'(x, q_p)=0$).
    % \begin{equation*}
    %     \frac{\mathrm{d} h}{\mathrm{d}\eta}(x, q_p) = 0.
    % \end{equation*}
\end{proposition}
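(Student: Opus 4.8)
The plan is to reduce the statement to a one-dimensional differentiation. Since $q_p$ is an optimal solution of $\inf_{s\in\R} G(x,s)$ by Lemma~\ref{lem:rock_cvar}, the quantity $\min_{s\in\R} G(x,s) = G(x,q_p)$ does not depend on $\eta$, so $h(x,\eta) = G(x,\eta) - G(x,q_p)$ and $\partial_\eta h(x,\eta) = \partial_\eta G(x,\eta)$ wherever the latter exists. It therefore suffices to show that $\eta\mapsto G(x,\eta)$ is differentiable at $q_p$ with $\partial_\eta G(x,q_p) = 0$.

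Next I would compute $\partial_\eta G(x,\eta)$ explicitly. Writing the expectation in \eqref{eq:G} through the density, $G(x,\eta) = \eta + \frac{1}{1-p}\int_{\eta}^{+\infty}(t-\eta) f_{x,\xi}(t)\,\D t$, where the integral is finite since $g(x,\xi)\in\integrable$. Differentiating under the integral sign is legitimate by dominated convergence --- for fixed $\eta$ the map $\eta'\mapsto\max(g(x,\xi)-\eta',0)$ is differentiable at $\eta$ almost surely (the distribution being atomless) with difference quotients bounded by $1$ --- and yields
\[
\partial_\eta G(x,\eta) = 1 - \frac{\probability[g(x,\xi) > \eta]}{1-p} = 1 - \frac{1 - \cdf{g(x,\xi)}(\eta)}{1-p}.
\]
Since $f_{x,\xi}$ is continuous, $\cdf{g(x,\xi)}$ is continuously differentiable, hence the right-hand side is continuous in $\eta$; thus $G(x,\cdot)$ is $C^1$ on $\R$ and in particular differentiable at $q_p$. (Alternatively, one can bypass dominated convergence by noting that $G(x,\cdot)$ is a finite convex function whose left and right derivatives are $1 - \probability[g(x,\xi)\ge\eta]/(1-p)$ and $1 - \probability[g(x,\xi)>\eta]/(1-p)$, which coincide in the atomless case.)

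Finally I would evaluate at $\eta = q_p$. In the atomless case the $p$-quantile satisfies $\cdf{g(x,\xi)}(q_p) = p$: the inequality $\cdf{g(x,\xi)}(q_p)\ge p$ holds by right-continuity together with the definition \eqref{eq:def_quantile} of the quantile, while $\cdf{g(x,\xi)}(q_p)\le p$ follows by letting $t\uparrow q_p$ in $\cdf{g(x,\xi)}(t) < p$ and invoking continuity. Substituting into the derivative formula gives $\partial_\eta G(x,q_p) = 1 - \frac{1-p}{1-p} = 0$, hence $h$ is differentiable at $\eta = q_p$ with $h'(x,q_p) = 0$. There is no real obstacle here: the only points needing a little care are the routine justification of the termwise differentiation and the identity $\cdf{g(x,\xi)}(q_p) = p$, which is precisely the feature of the continuous case that makes the minimum of $\eta\mapsto h(x,\eta)$ flat rather than kinked --- in sharp contrast with the linear growth away from $S(x)$ established in Theorem~\ref{uniform_parametric_error_bound} for finitely supported $\xi$.
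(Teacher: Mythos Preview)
Your argument is correct and in fact cleaner than the paper's. The paper does not differentiate $G$ directly; instead it recycles the case-by-case identities for $h$ obtained in the proof of Theorem~\ref{uniform_parametric_error_bound} (equations~\eqref{eq:error_first_case} and~\eqref{eq:error_second_case}), rewrites each side as an integral against the density $f_{x,\xi}$, and then applies the fundamental theorem of calculus separately for $\eta>q_p$ and $\eta<q_p$ to show that both one-sided derivatives vanish. Your route bypasses this case split by noting that $h(x,\cdot)$ and $G(x,\cdot)$ differ by a constant, and computing the global derivative $G'(x,\eta)=1-\tfrac{1-F_{g(x,\xi)}(\eta)}{1-p}$ via dominated convergence (or, equivalently, via the left/right derivative formula for the convex function $G(x,\cdot)$). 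This buys you a stronger conclusion for free---$G(x,\cdot)$ is $C^1$ everywhere, not merely differentiable at $q_p$---and makes transparent that only atomlessness of $g(x,\xi)$ (so that $F_{g(x,\xi)}(q_p)=p$) is actually used, the continuity of the density being superfluous for the stated result. The paper's approach, in contrast, keeps the link with the earlier error-bound machinery visible, which is its narrative purpose in the appendix.
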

\begin{proof}
    We first note that the existence of a density ensures the continuity of the cumulative distribution function of $g(\x, \xi)$, which in turns implies $\probability[g(\x, \xi) \leq q_p] = p$.
    % \begin{equation*}
    %     \probability[g(\x, \xi) \leq q_p] = p
    % \end{equation*}
    Let us now come back to expressions established in
    the proof of Theorem\;\ref{uniform_parametric_error_bound}.
    From~\eqref{eq:error_first_case}, we have, for $\eta > q_p$,
    \begin{equation*}
        \begin{split}
            h(\x, \eta) &= (\eta - q_p) \frac{1}{1-p} \left(\probability[g(\x, \xi) \leq q_p] - p + \expectation\left[\frac{\eta - g(\x, \xi)}{\eta - q_p} \one_{q_p < g(\x,\xi) \leq \eta}\right] \right) \\
                        &= \frac{1}{1-p} \expectation \left[ (\eta - g(\x, \xi)) \one_{q_p < g(\x,\xi) \leq \eta}\right]  ~=~ \frac{1}{1-p} \int_{q_p}^{\eta} (\eta - t) f_{\x, \xi}(t) \measuredWRT t \\
                        &= \frac{1}{1-p} \left(\eta \int_{q_p}^{\eta} f_{\x,  \xi}(t) \measuredWRT t - \int_{q_p}^{\eta} t f_{\x,  \xi}(t) \measuredWRT t \right). \\
        \end{split}
    \end{equation*}
    By continuity of the above integrands, we can use the fundamental theorem of calculus to get that $h(x, \cdot)$ admits a right derivative at $\eta = q_p$ such that
    \begin{equation*}
    \begin{split}
        %\frac{\mathrm{d} h}{\mathrm{d} \eta^+}(\x, \eta)
        h'_{+}(\x, \eta)&= \lim_{\substack{\eta \rightarrow q_p \\ \eta > q_p}} \frac{h(x, \eta) - h(x, q_p)}{\eta - q_p} \\
        &= \lim_{\substack{\eta \rightarrow q_p \\ \eta > q_p}} \frac{1}{1-p} \left(\eta \frac{\int_{q_p}^{\eta} f_{\x,  \xi}(t) \measuredWRT t }{\eta - q_p}  - \frac{\int_{q_p}^{\eta} t f_{\x,  \xi}(t) \measuredWRT t }{\eta - q_p} \right)\\
        &= \lim_{\substack{\eta \rightarrow q_p \\ \eta > q_p}} \frac{1}{1-p} \left(\eta f_{\x,  \xi}(q_p)  - q_p f_{\x,  \xi}(q_p) \right) ~ = ~0\,.
    \end{split}
    \end{equation*}
    For the case $\eta < q_p$, we have from~\eqref{eq:error_second_case}, together with~$\probability[g(\x, \xi) = q_p] = 0$:
    \begin{equation*}
        \begin{split}
            h(\x, \eta) &= (q_p - \eta) \frac{1}{1-p} \left( \expectation \left[ 1 - \frac{(q_p - g(\x, \xi) )}{q_p - \eta} \one_{\eta< g(\x,\xi) < q_p}\right] + \probability[g(\x, \xi) = q_p] \right)\\
            &= \frac{1}{1-p} \left( (\eta - q_p) \int_{\eta}^{q_p} f_{\x,  \xi}(t) \measuredWRT t -  \int_{\eta}^{q_p} (q_p -  t) f_{\x, \xi}(t)) \measuredWRT t   \right).
        \end{split}
    \end{equation*}
    Using again to the fundamental theorem of calculus, we get that $h(x,\cdot)$ admits a left derivative at $\eta = q_p$ with:
    \begin{equation*}
    \begin{split}
        %\frac{\mathrm{d}h}{\mathrm{d}\eta^-}(\x, \eta) &=
        h'_{-}(\x, \eta) &=
        \lim_{\substack{\eta \rightarrow q_p \\ \eta < q_p}} \frac{h(\x, \eta) - h(\x, q_p)}{\eta - q_p} \\
        &= \lim_{\substack{\eta \rightarrow q_p \\ \eta < q_p}}  \frac{1}{1-p} \left( (\eta - q_p) \frac{\int_{\eta}^{q_p} f_{\x,  \xi}(t) \measuredWRT t}{\eta - q_p}  - \frac{\int_{\eta}^{q_p} (q_p -  t) f_{\x, \xi}(t) \measuredWRT t}{\eta - q_p} \right) ~=~ 0\,.
    \end{split}
    \end{equation*}
    We can conclude that $h(x, \cdot)$ is differentiable at $q_p$ with zero as derivative.\qed
\end{proof}

\subsection{Proof of the subgradient explicit expressions}%\label{app:continuous_case}

We provide here a direct proof of the subgradient expressions of Proposition~\ref{prop:oracle_computation}. Let $(x, \eta) \in \setX \times \R$ be fixed, and consider first the case of $\varphi_1$. For $i \in \{1, \dots, n\}$, by successive applications of Theorems 4.1.1 and 4.4.2 from \cite[Chap.\;D]{hiriart2013convex} to the functions
\[\varphi_1^{(i)}: (x , \eta) \mapsto \frac{1}{n} \left[f(x) + \mu \max(\eta, 0) + \lambda \left(\eta + \frac{1}{1-p} \max(g(x, \xi_i) - \eta, 0) \right)\right]
\]
we get for any $i \in \{1, \dots, n\}$
\[
\begin{split}
     \frac{1}{n} s_f + \frac{\lambda}{n(1-p)} \mathds{1}_{g(x, \xi_i) > \eta}  s_g & \in \partial_x \varphi_1^{i} (x,\eta) \\
     \frac{\mu}{n} \mathds{1}_{\eta > 0} + \frac{\lambda}{n} -  \frac{\lambda}{n(1-p)} \mathds{1}_{g(x, \xi_i) > \eta}& \in \partial_\eta \varphi_1^{i} (x,\eta). \\
\end{split}
\]
Since $\varphi_1 = \sum_{i=1}^n \varphi_1^{(i)}$, we thus have
\[\begin{split}
     \left(s_f + \frac{\lambda}{n(1-p)} \sum_{i\in I_{>\eta}} {s_g}_i ~,~ \mu \mathds{1}_{\eta > 0} +  \lambda - \lambda \frac{\#(I_{>\eta})}{n(1-p)}   \right) \in \partial \varphi_1(x, \eta) \\
\end{split}
\]
For $\varphi_2$ we need first the whole subdifferential of the function $G$, which, using above mentioned properties, writes
\begin{equation*}
\begin{split}
    \partial G(\x, \eta) =  &\left\{ \left(\frac{1}{1-p} \sum_{i=1}^n \frac{{s_g}_i}{n} (\mathds{1}_{g(x, \xi_i) > \eta} + \beta_i \mathds{1}_{g(x, \xi_i) = \eta}),  \right. \right.\\
    &  \left. \left. 1 - \frac{1}{1-p} \sum_{i=1}^n \frac{1}{n} (\mathds{1}_{g(x, \xi_i) > \eta} + \beta_i \mathds{1}_{g(x, \xi_i) = \eta}) \right)\!, \; \beta_i \in [0,1], \quad \forall i \in \{1, \dots, n\} \right\}. \\
\end{split}
\end{equation*}
By taking $\beta_i = \alpha$ (for all $i\in \{1, \dots,  n\}$) with the specific $\alpha$ given in the statement, we can zero the second term in the above expression.
% \[
%   1 - \frac{1}{1-p} \sum_{i=1}^n \frac{1}{n} (\mathds{1}_{g(x, \xi_i) > \eta} + \beta_i \mathds{1}_{g(x, \xi_i) = \eta}) = 0
% \]
Now since $\varphi_2(x, \eta) = \lambda \min_{s \in \R} G(x, s)$ with $\argmin_{s \in \R} G(x, s) = Q_p(g(x, \xi))$, we apply Corollary\;4.5.3\;of~\cite[Chap.\;D]{hiriart2013convex} to obtain a subgradient of $\varphi_2$:
\[
    s_{\varphi_2} = \left(\frac{\lambda}{n(1-p)} \left(\sum_{i \in I_{>Q_{p}(g(x,\xi)}} {s_g}_i + \alpha \sum_{i \in I_{=Q_{p}(g(x,\xi)}}  {s_g}_i \right), 0 \right)
\]
which completes the proof.

\vspace*{-1.5ex}
\section{Implementation details on \texttt{TACO}}
\label{app:taco_details}

%----------------------------------------------------------------------
%%% Appendix TACO
%----------------------------------------------------------------------
% !TEX root = ./Main.tex

%----------------------------------------------------------------------
%%% ANALYSIS VF
%----------------------------------------------------------------------

\subsection{Further customization}\label{app:patam_details}

\vspace*{-0.7ex}
\texttt{TACO} relies on a set of hyperparameters to be provided by the user and specified

in a single dictionnary passed as an argument of the class \texttt{Optimizer}. There are two families of parameters to be specified. First, the parameters concerning the oracles $\varphi_1$ and $\varphi_2$. These are the starting penalization parameters $\lambda$ and $\mu$, the multiplicative factors to increment them along the penalization process, and the smoothing parameter of $\tilde \varphi_2$.

The second family of parameters concerns the bundle method. It gathers the  proximal parameters of the bundle method, the precision targeted, the starting point of the algorithm, the maximal size of the bundle information, and
parameters related used when restarting the bundle method (see more in the following section).

Overall the most important parameters to specify are the starting penalization parameters $\mu$ and $\lambda$  with respective keys \texttt{‘pen1’} and \texttt{‘pen2’} and the starting proximal parameter of the bundle algorithm. In the toolbox, we provide the set of parameters used in our numerical experiments. In addition of the final solution, it is possible to log the iterates, function values and time values, by calling the method with the option \texttt{logs=True}. The \texttt{verbose=True} option also allows the user to observe in real time the progression of the algorithm along the iterations.

Finally we underline that \texttt{TACO} subroutines rely on just-in-time compilation supported by \texttt{Numba}, which consistently improves the running time. Further improvements can be achieved when the instance considered can be cast as a \texttt{Numba jitclass}. The parameter \texttt{'numba'} in the input dictionnary of the associated \texttt{Optimizer} object should then be set to \texttt{True}.

\vspace*{-1.7ex}
\subsection{On the bundle algorithm}\label{app:bundle}
\vspace*{-0.8ex}
Here are some information on our implementation of the bundle algorithm of\;\cite{de2019proximal} to tackle the double penalized problem $(P_{\lambda, \mu})$ written as a DC problem.
We discuss the parameters used at various steps of the procedure.
We refer to \cite{de2019proximal} for more
details.

\vspace*{-0.5ex}
\begin{itemize}
    \item \textbf{Overall run}:  The starting point, the maximum number of iterations as well as the precision tolerance for termination may be set by the user.
\vspace*{0.5ex}

    \item \textbf{Subproblems}: Each iteration of the bundle algorithm requires solving a quadratic subproblem (see \cite[Eq.\;(9)]{de2019proximal}), for which we use the solver \texttt{cvxopt}~\cite{vandenberghe2010cvxopt} by simplicity.
    \vspace*{0.5ex}

    \item \textbf{Stabilization center}: %\textbf{Classification rule}:
    Whenever the solution of a subproblem satisfies a sufficient decrease in terms a function value, it is considered as a new stability center. The condition to qualify sufficient decrease is given in~\cite[Eq.\;(12)]{de2019proximal}. It involves a constant $\kappa$ which may be tuned by the user.
    \vspace*{0.5ex}

    \item \textbf{Proximal parameters}: The initial value of the proximal parameter involved in quadratic subproblems can be set by the user. The user can also specify  upper and lower acceptance bounds for it. After each iteration, the prox-parameter is updated: it is increased by a constant factor in case of serious step, and decreased otherwise. Both factors can be tuned by the user.
    \vspace*{0.5ex}

    \item \textbf{Bundle information}: The bundle of cutting-planes is augmented after each null step with new linearization, and emptied after each serious step.  We fix a maximum size for the bundle: above this parameter, the bundle is emptied and proximal parameter is restarted to a specified restarting value. When the bundle is emptied, we have the chance of a specific improvement: if the stability center is feasible in the chance-constraint, we replace the coordinate playing the role of $\eta$ by the $p$-quantile of $g(x,\xi)$, thus decrease the objective function.
    \vspace*{0.5ex}

    \item \textbf{Termination Criteria}: We use a simple stopping criteria: we stop  when the euclidean distance between the current iterate and the current stability center falls below a certain threshold specified by the user.
\end{itemize}

\vspace*{-3.3ex}
\subsection{Experimental settings}\label{app:setting}
\vspace*{-1ex}

\vspace*{-1ex}
\paragraph{Setting of Section\;\ref{sec:plot}.}

For this 2d problem, we use the starting point $x= (0.5,1.5)$ (and $\eta = 0.01$) well-inside the chance-constraint. The initial penalization parameters $\mu$ and $\lambda$ are respectively initialized to $400$ and $600$.
The initial proximal parameter is fixed to $38.0$ with lower and upper acceptance bounds set to $10^{-3}$ and $10^{3}$. Increasing and decreasing factors for this parameter are fixed to $1.05$ and $0.95$.
The classification rule parameter is set to $10^{-4}$.
The maximal size of the information bundle is set to $20$ and the threshold of the termination criteria is set to $10^{-7}$.

\vspace*{-1ex}
\paragraph{Setting for Section\;\ref{sec:family}.}

For any fixed dimension $d$ comprised in $\{2,10, 50, 200\}$, the algorithm is run from the starting point $(0.1, \dots, 0.1) \in \R^{d+1}$. The starting penalization parameter $\mu$, constant for the 4 instances, is set to $\mu = 10.0$. We tuned the second penalization parameter $\lambda$ along problems: we observed that $\lambda = \{1.75,1.5,1.5,2.0\}$ give good performances for the considered problems.

The starting proximal parameters is fixed to $60.0$ with lower and upper acceptance bounds set to $10^{-4}$ and $10^5$ respectively. Increasing and decreasing factors for the proximal parameter are fixed to $1.01$ and $0.99$. The classification rule parameter is set to $10^{-4}$. The maximal size of the information bundle is set to $300$.

\vspace*{-1ex}
\paragraph{Limitations of MINLP approach.}
Mixed-integer reformulation approaches (see e.g.\;\cite{ahmed2008solving}) are often considered as the state-of-the-art to solve chance constrained optimization problems by sample average approximation. Applying directly such a reformulation to Problem~\eqref{eq:def_pb2} in Section\;\ref{sec:family} leads to the equivalent mixed integer quadratic program:
\vspace*{-0.9ex}
\begin{alignat*}{2}
    &\!\!\min_{x\in \mathbb{R}^d\!,\, z\in\{0,1\}^N}\;\; &&-\sum_{i=1}^d x_i \\[-1ex]
    &~~~\text{s.t.}\;\; && \sum_{k=1}^d (\xi_{i})_{j,k}^2 x_k^2 - 100 \leq M\;z_i,\;\;\; \forall i \in \llbracket 1, N \rrbracket, ~ \forall j \llbracket 1, 10 \rrbracket\\[-1ex]
    & && \sum_{i=1}^N z_i \leq p N, ~~x \geq 0.
    \vspace*{-0.5ex}
\end{alignat*}
where $M$ is a large ``big-M" constant.
In our setting, such formulation involves $10 \times N = 100000$ quadratic constraint involving binary variables. We were not able to solve the resulting mixed-integer problem in reasonable times using the MINLP solver\;\texttt{Juniper}\;\cite{juniper} (that is based on Ipopt and JuMP). This shows that a direct application of reformulation techniques combined with reliable software failed on this problem in contrast with our approach.

\clearpage

\bibliography{optim}

\begin{thebibliography}{33}
\providecommand{\natexlab}[1]{#1}
\providecommand{\url}[1]{\texttt{#1}}
\expandafter\ifx\csname urlstyle\endcsname\relax
  \providecommand{\doi}[1]{doi: #1}\else
  \providecommand{\doi}{doi: \begingroup \urlstyle{rm}\Url}\fi

\bibitem[Ahmed and Shapiro(2008)]{ahmed2008solving}
S.~Ahmed and A.~Shapiro.
\newblock Solving chance-constrained stochastic programs via sampling and
  integer programming.
\newblock In \emph{State-of-the-art decision-making tools in the
  information-intensive age}, pages 261--269. Informs, 2008.

\bibitem[Ben-Tal and Teboulle(2007)]{ben2007old}
A.~Ben-Tal and M.~Teboulle.
\newblock An old-new concept of convex risk measures: The optimized certainty
  equivalent.
\newblock \emph{Mathematical Finance}, 17\penalty0 (3):\penalty0 449--476,
  2007.

\bibitem[Calafiore and Campi(2006)]{calafiore2006scenario}
G.~C. Calafiore and M.~C. Campi.
\newblock The scenario approach to robust control design.
\newblock \emph{IEEE Transactions on Automatic Control}, 51\penalty0
  (5):\penalty0 742--753, 2006.

\bibitem[Charnes and Cooper(1959)]{charnes1959chance}
A.~Charnes and W.~W. Cooper.
\newblock Chance-constrained programming.
\newblock \emph{Management science}, 6\penalty0 (1):\penalty0 73--79, 1959.

\bibitem[Chow et~al.(2017)Chow, Ghavamzadeh, Janson, and Pavone]{chow2017risk}
Y.~Chow, M.~Ghavamzadeh, L.~Janson, and M.~Pavone.
\newblock Risk-constrained reinforcement learning with percentile risk
  criteria.
\newblock \emph{The Journal of Machine Learning Research}, 18\penalty0
  (1):\penalty0 6070--6120, 2017.

\bibitem[Clarke(1990)]{clarke1990optimization}
F.~H. Clarke.
\newblock \emph{Optimization and nonsmooth analysis}, volume~5.
\newblock Siam, 1990.

\bibitem[de~Oliveira(2019)]{de2019proximal}
W.~de~Oliveira.
\newblock Proximal bundle methods for nonsmooth dc programming.
\newblock \emph{Journal of Global Optimization}, 2019.

\bibitem[Dentcheva(2009)]{Dentcheva_2009}
D.~Dentcheva.
\newblock Optimisation models with probabilistic constraints.
\newblock In A.~Shapiro, D.~Dentcheva, and A.~Ruszczy\'{n}ski, editors,
  \emph{Lectures on Stochastic Programming. Modeling and Theory}, volume~9 of
  \emph{MPS-{SIAM} series on optimization}. SIAM, 2009.

\bibitem[Dentcheva et~al.(2000)Dentcheva, Pr{\'e}kopa, and
  Ruszczynski]{dentcheva2000concavity}
D.~Dentcheva, A.~Pr{\'e}kopa, and A.~Ruszczynski.
\newblock Concavity and efficient points of discrete distributions in
  probabilistic programming.
\newblock \emph{Mathematical Programming}, 89\penalty0 (1), 2000.

\bibitem[F{\"o}llmer and Schied(2002)]{follmer2002convex}
H.~F{\"o}llmer and A.~Schied.
\newblock Convex measures of risk and trading constraints.
\newblock \emph{Finance and stochastics}, 6\penalty0 (4):\penalty0 429--447,
  2002.

\bibitem[Henrion and Strugarek(2008)]{Henrion_Strugarek_2008}
R.~Henrion and C.~Strugarek.
\newblock Convexity of chance constraints with independent random variables.
\newblock \emph{Computational Optimization and Applications}, 41:\penalty0
  263--276, 2008.

\bibitem[Hiriart-Urruty and Lemar{\'e}chal(2013)]{hiriart2013convex}
J.-B. Hiriart-Urruty and C.~Lemar{\'e}chal.
\newblock \emph{Convex analysis and minimization algorithms I: Fundamentals},
  volume 305.
\newblock Springer science \& business media, 2013.

\bibitem[Hong et~al.(2011)Hong, Yang, and Zhang]{hong2011sequential}
L.~J. Hong, Y.~Yang, and L.~Zhang.
\newblock Sequential convex approximations to joint chance constrained
  programs: A monte carlo approach.
\newblock \emph{Operations Research}, 59\penalty0 (3), 2011.

\bibitem[Kröger et~al.(2018)Kröger, Coffrin, Hijazi, and Nagarajan]{juniper}
O.~Kröger, C.~Coffrin, H.~Hijazi, and H.~Nagarajan.
\newblock Juniper: An open-source nonlinear branch-and-bound solver in julia.
\newblock In \emph{Integration of Constraint Programming, Artificial
  Intelligence, and Operations Research}. Springer International Publishing,
  2018.
\newblock ISBN 978-3-319-93031-2.

\bibitem[Laguel et~al.(2020)Laguel, Malick, and
  Harchaoui]{laguel-etal:spqr:mlsp2020}
Y.~Laguel, J.~Malick, and Z.~Harchaoui.
\newblock First-order optimization for superquantile-based supervised learning.
\newblock In \emph{2020 IEEE 30th International Workshop on Machine Learning
  for Signal Processing (MLSP)}, pages 1--6. IEEE, 2020.

\bibitem[Lam et~al.(2015)Lam, Pitrou, and Seibert]{10.1145/2833157.2833162}
S.~K. Lam, A.~Pitrou, and S.~Seibert.
\newblock Numba: A llvm-based python jit compiler.
\newblock In \emph{Proceedings of the Second Workshop on the LLVM Compiler
  Infrastructure in HPC}, LLVM '15, New York, NY, USA, 2015. Association for
  Computing Machinery.
\newblock ISBN 9781450340052.

\bibitem[Luedtke and Ahmed(2008)]{Luedtke_Ahmed_2008}
J.~Luedtke and S.~Ahmed.
\newblock A sample approximation approach for optimization with probabilistic
  constraints.
\newblock \emph{{SIAM} Journal on Optimization}, 19:\penalty0 674--699, 2008.

\bibitem[Luenberger and Ye(1984)]{luenberger1984linear}
D.~G. Luenberger and Y.~Ye.
\newblock \emph{Linear and nonlinear programming}, volume~2.
\newblock Springer, 1984.

\bibitem[Medova(1998)]{medova1998chance}
E.~Medova.
\newblock Chance-constrained stochastic programming forintegrated services
  network management.
\newblock \emph{Annals of Operations Research}, 81:\penalty0 213--230, 1998.

\bibitem[Nemirovski and Shapiro(2006)]{nemirovski2006convex}
A.~Nemirovski and A.~Shapiro.
\newblock Convex approximations of chance constrained programs.
\newblock \emph{SIAM Journal on Optimization}, 17\penalty0 (4):\penalty0
  969--996, 2006.

\bibitem[Nesterov(2005)]{nesterov2005smooth}
Y.~Nesterov.
\newblock Smooth minimization of non-smooth functions.
\newblock \emph{Mathematical programming}, 103\penalty0 (1):\penalty0 127--152,
  2005.

\bibitem[Pagnoncelli et~al.(2009)Pagnoncelli, Ahmed, and
  Shapiro]{pagnoncelli2009sample}
B.~K. Pagnoncelli, S.~Ahmed, and A.~Shapiro.
\newblock Sample average approximation method for chance constrained
  programming: theory and applications.
\newblock \emph{Journal of optimization theory and applications}, 142\penalty0
  (2):\penalty0 399--416, 2009.

\bibitem[Pr{\'{e}}kopa(1995)]{Prekopa_1995}
A.~Pr{\'{e}}kopa.
\newblock \emph{Stochastic Programming}.
\newblock Kluwer, Dordrecht, 1995.
\newblock \doi{10.1007/978-94-017-3087-7}.

\bibitem[Pr{\'e}kopa and Sz{\'a}ntai(1978)]{prekopa1978flood}
A.~Pr{\'e}kopa and T.~Sz{\'a}ntai.
\newblock Flood control reservoir system design using stochastic programming.
\newblock In \emph{Mathematical programming in use}, pages 138--151. Springer,
  1978.

\bibitem[Rockafellar and Royset(2013)]{rockafellar2013superquantiles}
R.~T. Rockafellar and J.~O. Royset.
\newblock Superquantiles and their applications to risk, random variables, and
  regression.
\newblock In \emph{Theory Driven by Influential Applications}. INFORMS, 2013.

\bibitem[Rockafellar and Royset(2014)]{rockafellar2014random}
R.~T. Rockafellar and J.~O. Royset.
\newblock Random variables, monotone relations, and convex analysis.
\newblock \emph{Mathematical Programming}, 148\penalty0 (1-2):\penalty0
  297--331, 2014.

\bibitem[Rockafellar and Uryasev(2000)]{rockafellar2000optimization}
R.~T. Rockafellar and S.~Uryasev.
\newblock Optimization of conditional value-at-risk.
\newblock \emph{Journal of risk}, 2:\penalty0 21--42, 2000.

\bibitem[Ruszczy{\'n}ski and Shapiro(2006)]{ruszczynski2006optimization}
A.~Ruszczy{\'n}ski and A.~Shapiro.
\newblock Optimization of convex risk functions.
\newblock \emph{Mathematics of operations research}, 31\penalty0 (3):\penalty0
  433--452, 2006.

\bibitem[{van Ackooij}(2020)]{vanAckooij_2020}
W.~{van Ackooij}.
\newblock A discussion of probability functions and constraints from a
  variational perspective.
\newblock \emph{Set-Valued and Variational Analysis (online)}, 28:\penalty0
  585–609, 2020.

\bibitem[{van Ackooij} et~al.(2014){van Ackooij}, Henrion, M\"{o}ller, and
  Zorgati]{vanAckooij_Henrion_Moller_Zorgati_2011b}
W.~{van Ackooij}, R.~Henrion, A.~M\"{o}ller, and R.~Zorgati.
\newblock Joint chance constrained programming for hydro reservoir management.
\newblock \emph{Optimization and Engineering}, 15, 2014.

\bibitem[Van~Ackooij et~al.(2020)Van~Ackooij, Laguel, Malick, and
  Matiussi-Ramalho]{vanAckooij2020CC}
W.~Van~Ackooij, Y.~Laguel, J.~Malick, and G.~Matiussi-Ramalho.
\newblock On the convexity of level-sets of probability functions.
\newblock \emph{Submitted}, 2020.

\bibitem[Vandenberghe(2010)]{vandenberghe2010cvxopt}
L.~Vandenberghe.
\newblock The cvxopt linear and quadratic cone program solvers.
\newblock \emph{Online: http://cvxopt. org/documentation/coneprog. pdf}, 2010.

\bibitem[Ye et~al.(1997)Ye, Zhu, and Zhu]{ye1997exact}
J.~J. Ye, D.~Zhu, and Q.~J. Zhu.
\newblock Exact penalization and necessary optimality conditions for
  generalized bilevel programming problems.
\newblock \emph{SIAM Journal on optimization}, 7\penalty0 (2):\penalty0
  481--507, 1997.

\end{thebibliography}
\bibliographystyle{abbrvnat}
\end{document}